\newcommand*\patchAmsMathEnvironmentForLineno[1]{%
  \expandafter\let\csname old#1\expandafter\endcsname\csname #1\endcsname
  \expandafter\let\csname oldend#1\expandafter\endcsname\csname end#1\endcsname
  \renewenvironment{#1}%
     {\linenomath\csname old#1\endcsname}%
     {\csname oldend#1\endcsname\endlinenomath}}%
\newcommand*\patchBothAmsMathEnvironmentsForLineno[1]{%
  \patchAmsMathEnvironmentForLineno{#1}%
  \patchAmsMathEnvironmentForLineno{#1*}}%
\theoremstyle{definition}
\newtheorem{definition}{Definition}[section]
\newtheorem{notice}{Remark}[section]
\newtheorem{corollary}{Corollary}[section]
\theoremstyle{remark}
\newtheorem{case}{Case}[section]
\theoremstyle{plain}
\newtheorem{theorem}{Theorem}[section]
\newtheorem{lemma}[theorem]{Lemma}
\begin{document}

\title[Compactness]{Compactness in Lorentz sequence spaces}

\author{Paweł Sawicki}

\address{Doctoral School of Quantum Information Theory, University of Gda\'nsk, 80-308 Gda\'nsk, Poland }

\email[P.~Sawicki]{p.a.sawicki@wp.pl}

\keywords{Lorentz sequence spaces, compactness criteria, equinormed sets}
\subjclass[2020]{46B50, 46E30}

\begin{abstract}
In this paper we are going to discuss compactness in Lorentz sequence spaces. Firstly, it will be shown how to define such a space, check whether a sequence belongs to it and calculate its norm. Equipped with this knowledge, we will proceed to propose usable compactness criteria for Lorentz sequence spaces, employing the concept of seminorms.
\end{abstract}

\maketitle

\section{Introduction}

Lorentz spaces were defined in year 1950 by George Gunter Lorentz in the paper \cite{lorentz} as functional spaces consisting of measurable functions on a bounded interval of $\mathbb{R}$. This definition can be naturally generalised to all possible spaces with measure, as in \cite{graf}. This source expands significantly on the known uses of Lorentz spaces and possibilities of further research.
\newline
\newline
As a special case, the space with measure can be chosen as $\mathbb{N}$ with the counting measure, thus defining a $l_{p, q}$ space. It is discussed in the article \cite{csdgvs}. A somewhat more general version, with a weight sequence $w$, is given by \cite{ciesielewicki}. Thus defined \emph{Lorentz sequence spaces} will be of main interest in our further considerations.
\newline
\newline
They can also be defined in a different but equivalent manner, through permutations, as in \cite{linden}. It is mentioned there that the concept of \emph{nonincreasing rearrangement} can be used to conduct the definition more smoothly. In the later sections here, it will be described precisely by the means of injective maps, serving the purpose of transforming a fully abstract definition into a constructive one.
\newline
\newline
To the best of our knowledge, there are no known compactness criteria in Lorentz spaces. In this paper, we are stating (and proving the correctness of) one for Lorentz sequence spaces.
\newline
\newline
Let us give a short reminder about compactness: a set in a metric space is called \emph{compact} if every its open cover has a finite subcover. A set is called \emph{relatively compact} when its closure is compact. A set is called \emph{precompact} if, for every $\varepsilon>0$, it has a finite cover by sets of radius at most $\varepsilon$. These two notions are equivalent when applied to subsets of a complete space. In metric spaces, a set is compact exactly when it is relatively compact and complete.
\newline
\newline
Being able to determine whether a set is (relatively) compact can be useful in a number of ways. For example, it enables methods efficient in solving nonlinear equations (Schauder fixed point theorem, Leray-Schauder degree), as well as investigation of compact operators, notable for their spectral properties.

\section{Basic definitions}

\begin{definition}[Space $l_p$]
For $a=(a_1, a_2, a_3...) \in \mathbb{R}^\mathbb{N}$ and $p \in [1, \infty)$ let $||a||_p=(\sum_{i=1}^\infty |a_i|^p)^{\frac{1}{p}}$. $||a||_p$ is known as a $p$-norm of a sequence. The space of all sequences with finite $p$-norm is known as $l_p$. It is a Banach space.
\end{definition}

\begin{definition}[Permutations of $\mathbb{N}$]
Let us denote the set of all permutations of $\mathbb{N}$ as $\Pi \subset \mathbb{N}^\mathbb{N}$. Thus, for any $\sigma = (\sigma_1, \sigma_2, \sigma_3, ...) \in \mathbb{N}^\mathbb{N}$, $\sigma \in \Pi$ if and only if the mapping defined by $\sigma$ (index to value) is bijective. Additionally, we can define a set $\Psi$ of all injective maps, so that $\Pi \subset \Psi \subset \mathbb{N}^\mathbb{N}$, and $\sigma \in \Psi$ if and only if the mapping defined by $\sigma$ is injective (one-to-one).
\end{definition}

\begin{definition}[Weight sequence]
A sequence $w$ of positive real numbers will be called a \emph{weight sequence} when it satisfies $w_1=1 \ge w_2 \ge w_3... \xrightarrow[\infty]{} 0$ and $\sum_{i=1}^\infty w_i=+\infty$.
\end{definition}

\begin{definition}[Lorentz space]
For any $a=(a_1, a_2, a_3, ...) \in \mathbb{R}^\mathbb{N}$, $p \in [1, \infty)$, weight sequence $w$ and permutation $\sigma$, let us define the sequence $L(a, p, w, \sigma)$: $L_i=a_{\sigma_i} \cdot \sqrt[p]w_i$, $i \in \mathbb{N}$. The Lorentz norm $||\cdot||_{p, w}$ is given as $$||a||_{p, w}=\sup_{\sigma \in \Pi}: ||L(a, p, w, \sigma)||_p.$$ All sequences with finite Lorentz $p, w$-norm form a Lorentz space $L_{p, w}$.
\end{definition}

\begin{theorem}
For any $a$, $p$, $w$ defined as above, $$||a||_{p, w}=\sup_{\sigma \in \Psi}: ||L(a, p, w, \sigma)||_p.$$
\end{theorem}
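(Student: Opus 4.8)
The plan is to establish the two inequalities between $\sup_{\sigma\in\Pi}\|L(a,p,w,\sigma)\|_p$ and $\sup_{\sigma\in\Psi}\|L(a,p,w,\sigma)\|_p$ separately. One direction is immediate: since $\Pi\subset\Psi$, taking the supremum over the larger index set can only increase it, so
\[
\|a\|_{p,w}=\sup_{\sigma\in\Pi}\|L(a,p,w,\sigma)\|_p\ \le\ \sup_{\sigma\in\Psi}\|L(a,p,w,\sigma)\|_p .
\]
The whole content of the theorem is therefore the reverse inequality: no injective map can do strictly better than every permutation.

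For that, I would fix an arbitrary $\sigma\in\Psi$ and an arbitrary $N\in\mathbb{N}$. The values $\sigma_1,\dots,\sigma_N$ are pairwise distinct, so $\mathbb{N}\setminus\{\sigma_1,\dots,\sigma_N\}$ is countably infinite; choosing any bijection of $\{N+1,N+2,\dots\}$ onto it and gluing it to the restriction $\sigma|_{\{1,\dots,N\}}$ produces a permutation $\tau\in\Pi$ with $\tau_i=\sigma_i$ for all $i\le N$. Because every term of the series defining the $p$-norm is nonnegative, discarding the tail gives
\[
\|a\|_{p,w}^{\,p}\ \ge\ \|L(a,p,w,\tau)\|_p^{\,p}\ =\ \sum_{i=1}^{\infty}|a_{\tau_i}|^p\,w_i\ \ge\ \sum_{i=1}^{N}|a_{\sigma_i}|^p\,w_i .
\]

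Finally I would let $N\to\infty$. The partial sums $\sum_{i=1}^{N}|a_{\sigma_i}|^p w_i$ are nondecreasing in $N$, hence converge in $[0,+\infty]$ to $\sum_{i=1}^{\infty}|a_{\sigma_i}|^p w_i=\|L(a,p,w,\sigma)\|_p^{\,p}$; the bound above is uniform in $N$, so it passes to the limit and yields $\|a\|_{p,w}^{\,p}\ge\|L(a,p,w,\sigma)\|_p^{\,p}$. Since $\sigma\in\Psi$ was arbitrary, $\|a\|_{p,w}\ge\sup_{\sigma\in\Psi}\|L(a,p,w,\sigma)\|_p$, which combined with the first paragraph gives equality. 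I do not expect a genuine obstacle; the only points requiring a careful write-up are the existence of the permutation extension $\tau$ (a plain counting argument) and the passage to the limit, which is legitimate precisely because the series has nonnegative terms — this also makes the case $\|a\|_{p,w}=+\infty$ fall out automatically rather than needing separate treatment.
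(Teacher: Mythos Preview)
Your proof is correct and follows essentially the same strategy as the paper: both arguments extend a finite prefix of an arbitrary injective map to a genuine permutation and use non-negativity of the terms to pass from partial sums to the full series. The only cosmetic difference is that the paper phrases the limiting step as an $\varepsilon$-argument on the supremum, whereas you bound each partial sum directly and take $N\to\infty$; the underlying idea is identical.
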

\begin{proof}
It is obvious that $\sup_{\sigma \in \Pi}: ||L(a, p, w, \sigma)||_p \le \sup_{\sigma \in \Psi}: ||L(a, p, w, \sigma)||_p$. We can concentrate on proving the reverse inequality, which can be rewritten as $\sup_{\sigma \in \Psi}: \sum_{i=1}^\infty |a_{\sigma_i}|^p \cdot w_i \le \sup_{\sigma \in \Pi}: \sum_{i=1}^\infty |a_{\sigma_i}|^p \cdot w_i$. By definition, the left-hand side equals $\sup_{\sigma \in \Psi}: \lim_{n \to \infty} \sum_{i=1}^n |a_{\sigma_i}|^p \cdot w_i$. However, for any $\psi \in \Psi$ and $n \in \mathbb{N}$, there exists a permutation $\psi_n \in \Pi$ such that $\psi_i=\psi_{n, i}$ for any $i \le n$. Thus, $$\forall \varepsilon >0 \; \exists \psi \in \Psi, m \in \mathbb{N}, \psi_m \in \Pi: \; \sup_{\sigma \in \Psi}: \lim_{n \to \infty} \sum_{i=1}^n |a_{\sigma_i}|^p \cdot w_i \le$$ $$\le \varepsilon + \lim_{n \to \infty} \sum_{i=1}^n |a_{\psi_i}|^p \cdot w_i \le 2 \varepsilon + \sum_{i=1}^m |a_{\psi_i}|^p \cdot w_i \le$$ $$\le 2 \varepsilon + \lim_{n \to \infty} \sum_{i=1}^n |a_{\psi_{m, i}}|^p \cdot w_i \le 2 \varepsilon + \sup_{\sigma \in \Pi}: \sum_{i=1}^\infty |a_{\sigma_i}|^p \cdot w_i.$$ Hence indeed $\sup_{\sigma \in \Psi}: \sum_{i=1}^\infty |a_{\sigma_i}|^p \cdot w_i \le \sup_{\sigma \in \Pi}: \sum_{i=1}^\infty |a_{\sigma_i}|^p \cdot w_i$ and this completes the proof. For divergent series ($a \notin L_{p, w}$), the reasoning is fully analogous.
\end{proof}

\begin{theorem}[Lorentz norm supremum is achieved]
\label{basicth2}
For any values of $a$, $p$, $w$ such that $a \in L_{p, w}$, there exists $\sigma \in \Psi$ such that $||a||_{p, w}=||L(a, p, w, \sigma)||_p$.
\end{theorem}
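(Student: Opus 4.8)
The plan is to show that the supremum defining $\|a\|_{p,w}$ is attained at the \emph{nonincreasing rearrangement} of $|a|$; this is, in spirit, the Hardy--Littlewood rearrangement inequality transported to the setting of injective maps. As a preliminary reduction I would first record that $a\in L_{p,w}$ forces $a_i\to 0$. Indeed, if some $\delta>0$ had $|a_i|\ge\delta$ for infinitely many indices, enumerate them as $i_1<i_2<\cdots$ and take $\sigma\in\Psi$ with $\sigma_k=i_k$; then $\|L(a,p,w,\sigma)\|_p^p\ge\delta^p\sum_{k=1}^\infty w_k=\infty$ since $\sum_k w_k=\infty$ for a weight sequence, contradicting $a\in L_{p,w}$ in view of the previous theorem. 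Hence $|a|$ is a null sequence.

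Being null, $|a|$ admits a nonincreasing rearrangement: choose $\sigma^*_1$ to be the least index maximising $|a_j|$, then $\sigma^*_2$ the least index maximising $|a_j|$ over the remaining indices, and so on. Each successive maximum is attained because after deleting finitely many terms the sequence is still null, and the resulting injection is onto (any fixed index has only finitely many competitors of strictly larger value). Thus $\sigma^*\in\Pi\subset\Psi$ and, writing $a^*_i\zdef|a_{\sigma^*_i}|$, we have $a^*_1\ge a^*_2\ge\cdots$. The claim to be proved is $\|a\|_{p,w}=\|L(a,p,w,\sigma^*)\|_p$, i.e. $\sigma^*$ realises the supremum.

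For the upper bound, fix an arbitrary $\sigma\in\Psi$ and put $b_i=|a_{\sigma_i}|^p$, $c_i=(a^*_i)^p$, with partial sums $B_n=\sum_{i=1}^n b_i$ and $C_n=\sum_{i=1}^n c_i$. Since $\sigma$ is injective, $b_1,\dots,b_n$ are $n$ elements of the multiset $\{\,|a_j|^p:j\in\mathbb{N}\,\}$ sitting at distinct indices, whereas $c_1,\dots,c_n$ are its $n$ largest elements (well defined because $a$ is null and $x\mapsto x^p$ is increasing); therefore $B_n\le C_n$ for every $n$. Summation by parts, using that $w_i-w_{i+1}\ge0$ and $w_N\ge0$, then gives
$$\sum_{i=1}^N b_i w_i=\sum_{i=1}^{N-1}B_i(w_i-w_{i+1})+B_N w_N\le\sum_{i=1}^{N-1}C_i(w_i-w_{i+1})+C_N w_N=\sum_{i=1}^N c_i w_i.$$
Letting $N\to\infty$ yields $\|L(a,p,w,\sigma)\|_p^p\le\sum_{i=1}^\infty c_i w_i=\|L(a,p,w,\sigma^*)\|_p^p$, the right-hand side being finite since $\sigma^*\in\Psi$ and $a\in L_{p,w}$.

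Taking the supremum over $\sigma\in\Psi$ and applying the previous theorem gives $\|a\|_{p,w}\le\|L(a,p,w,\sigma^*)\|_p$, while the reverse inequality is immediate because $\sigma^*\in\Pi$; hence equality, which is exactly the assertion. I expect the only genuinely delicate point to be the second paragraph: making rigorous that a true nonincreasing rearrangement exists (attainment of each successive maximum and surjectivity of the constructed injection) and that its partial sums $C_n$ really are the sums of the $n$ largest values of the multiset. Once that is set up, the deduction of $B_n\le C_n$ and the Abel-summation estimate are routine.
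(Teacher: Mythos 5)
Your proposal is correct, and it takes a genuinely different route from the paper's proof. Where you first establish that $a\in L_{p,w}$ forces $|a|$ to be a null sequence (so that a nonincreasing rearrangement exists by a greedy choice), the paper argues in the contrapositive: it shows that if no nonincreasing rearrangement covering all nonzero entries exists, one can extract an increasing sequence of indices with $|a_{j_k}|$ bounded below by a positive constant, whence divergence -- the two arguments are logically equivalent, but yours isolates the cleaner statement ``$a\in L_{p,w}\Rightarrow a_i\to 0$'' up front. The bigger difference is in the optimality step: the paper proves $\sum_{i=1}^n c_i w_i\ge\sum_{i=1}^n c_{\xi_i}w_i$ by induction on $n$ via a transposition argument (the key inequality being $(c_k-c_n)(w_l-w_n)\ge 0$), after a somewhat informal reduction to the case where $\xi$ maps $\{1,\dots,n\}$ onto itself; you instead use the majorization $B_n\le C_n$ of partial sums together with Abel summation and the monotonicity of $w$. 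Your Abel-summation route avoids the paper's reduction step and the induction entirely, and is arguably tighter; the paper's exchange argument is more elementary but requires more bookkeeping. One small slip: your greedy $\sigma^*$ need not lie in $\Pi$ -- if $a$ has infinitely many nonzero entries and at least one zero entry, the zero indices are never selected, so $\sigma^*$ is injective but not surjective. This is harmless, since the theorem only asks for $\sigma\in\Psi$ and the reverse inequality $\|a\|_{p,w}\ge\|L(a,p,w,\sigma^*)\|_p$ follows from the previous theorem's identification of the norm with the supremum over $\Psi$, but the phrase ``$\sigma^*\in\Pi$'' should be weakened to ``$\sigma^*\in\Psi$''.
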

\begin{proof}
Initial observation: we can assume without loss of generality that the sequence $a$ has infinitely many nonzero entries, otherwise $\sigma$ permutates these nonzero entries and the problem is trivial.
\newline
\newline
First part: we will show that when there exists $\sigma \in \Psi$ for which $|a_\sigma|$ is nonincreasing and covers all nonzero elements of the sequence, for this $\sigma$ the function $L$ attains its maximum. For this purpose, let us choose any other injective map $\psi$. We need to show that $$||L(a, p, w, \sigma)||_p \ge ||L(a, p, w, \psi)||_p,$$ or equivalently $$\sum_{i=1}^\infty |a_{\sigma_i}|^p \cdot w_i \ge \sum_{i=1}^\infty |a_{\psi_i}|^p \cdot w_i.$$ For clarity of notation, we can pick a (nonincreasing positive) sequence $c$ by $c_i=|a_{\sigma_i}|^p$, with a corresponding map $\xi : \mathbb{N} \to \mathbb{N}$ such that $c_{\xi_i}=|a_{\psi_i}|^p$. Now we wish to prove that $\sum_{i=1}^\infty c_i \cdot w_i \ge \sum_{i=1}^\infty c_{\xi_i} \cdot w_i$. Because the sum of a series is defined as the limit of its partial sums, it is enough to show for any $n \in \mathbb{N}$ that $$\sum_{i=1}^n c_i \cdot w_i \ge \sum_{i=1}^n c_{\xi_i} \cdot w_i.$$ We can assume without loss of generality that $\xi$ maps $\{1, 2, ..., n\}$ onto $\{1, 2, ..., n\}$, as otherwise the right-hand side of our inequality could be trivially increased. Now the simplest proof is by the principle of mathematical induction. The first step (for $n=1$) is obvious, so let us assume that the lemma is true for $n-1$. Moreover, let $\xi$ map $l$ to $n$ and $n$ to $k$. We can define a new map $\xi'$ which differs from $\xi$ only by mapping $l$ to $k$ and $n$ to $n$. By the inductive assumption, $\sum_{i=1}^n c_i \cdot w_i \ge \sum_{i=1}^n c_{\xi'_i} \cdot w_i$. There also occurs $$\sum_{i=1}^n c_{\xi'_i} \cdot w_i-\sum_{i=1}^n c_{\xi_i} \cdot w_i=c_kw_l+c_nw_n-c_kw_n-c_nw_l=(c_k-c_n)(w_l-w_n) \ge 0.$$ Therefore $\sum_{i=1}^n c_i \cdot w_i \ge \sum_{i=1}^n c_{\xi_i} \cdot w_i$. This completes the first part of the proof.
\newline
\newline
Second part: we will show that when there is no $\sigma \in \Psi$ for which $|a_\sigma|$ would be nonincreasing and would contain all nonzero elements of the sequence $a$, such a sequence $a$ does not belong to any $L_{p, w}$. For this purpose, let us define the sequence $\sigma$ by the following conditions: $\mathbb{N}_i=\mathbb{N} \backslash \{ \sigma_1, \sigma_2, ..., \sigma_{i-1} \}$, $\sigma_i \in \mathbb{N}_i$, $\forall j \in \mathbb{N}_i: \; |a_{\sigma_i}| \ge |a_j| \land (|a_{\sigma_i}|=|a_j| \implies \sigma_i \le j)$. Now try to assume that these conditions do not define a unique $\sigma_i$ (and does define unique $\sigma_h$ for all natural numbers $h<i$). Then, by contraposition, $$\forall j \in \mathbb{N}_i \; \exists m \in \mathbb{N}_i: \; |a_m| \ge |a_j| \land m>j.$$ Therefore it would be possible to construct an infinite sequence in $\mathbb{N}_i$ $j_1, j_2, j_3...$ such that $j_k$ would be strictly increasing and $|a_{j_k}|$ would be increasing. However, by the initial observation, it is possible to choose such $j_1 \in \mathbb{N}_i$ that $|a_{j_1}|>0$. Thus we could define $\psi \in \Psi = (j_1, j_2, j_3, ...)$, and for any weight sequence $w$ and $p \in [1, \infty)$ the series $|a_{\psi_i}|^p \cdot w_i$ would be divergent (because the left-hand factors are greater or equal $|a_{j_1}|^p>0$ and the series of the right-hand factors is divergent), showing that in such a case $a \notin L_{p, w}$. Hence we have proven that, for $a \in L_{p, w}$, our definition of the injective map $\sigma \in \Psi$ is correct and renders a nonincreasing $|a_\sigma|$, realising the Lorentz norm supremum.
\end{proof}

\begin{notice}[Behaviour of $|a_\sigma|$]
When $\lim_{n \to \infty} |a_{\sigma_n}|^p>0$, $a \notin L_{p, w}$. When the limit equals $0$ and the series is convergent, $a \in L_{p, w}$. When the limit equals $0$ but the series is divergent, $a$ might or might not belong to $L_{p, w}$.
\end{notice}

\begin{case}
For $p=1$, $|a_{\sigma_i}|=w_i=\frac{1}{i}$ there occurs $a \in L_{p, w}$, $||a||_{p, w}=\frac{\pi^2}{6}$.
\end{case}

\begin{case}
For $p=1$, $|a_{\sigma_i}|=w_i=\frac{1}{\sqrt{i}}$ there occurs $a \notin L_{p, w}$.
\end{case}

\section{Compactness}

For determining compactness criteria in Lorentz sequence spaces, we will use the seminorm approach, as laid out in \cite{gkm}.

\begin{definition}[Seminorm]
For any real vector space $V$, a \emph{seminorm} is a function $s: V \rightarrow [0, \infty)$ such that $\forall x \in \mathbb{R}, v \in V \; s(x \cdot v)=|x| \cdot s(v)$, $\forall v, w \in V \; s(v)+s(w) \ge s(v+w)$. A seminorm for which $s(v)=0 \iff v=0$ is called a \emph{norm}, as used in the previous section.
\end{definition}

The following two definitions are repeated from the aforementioned article.

\begin{definition}[Regular family of seminorms]
\label{compdef2}
When on a normed space $E, ||\cdot||$ there is a family of seminorms $\{||\cdot||_i\}_{i \in I}$, it is known as \emph{regular} if it satisfies three conditions:

1) $\forall x \in E \; ||x||=\sup_{i \in I} \; ||x||_i$;

2) the family $\{||\cdot||_i\}_{i \in I}$ forms a directed set, i.e. $$\forall i, j \in I \; \exists k \in I \; \forall x \in E \; ||x||_k \ge ||x||_i \land ||x||_k \ge ||x||_j;$$

3) every bounded sequence in $E$ has a subsequence which is Cauchy with respect to all seminorms in the family.
\end{definition}

\begin{definition}[Equinormed set]
\label{compdef3}
A set $A \subset E$ is \emph{equinormed} (with respect to a given family of seminorms indexed by set $I$) if $$\forall \varepsilon >0 \; \exists i \in I \; \forall x \in A \; ||x|| \le ||x||_i+\varepsilon.$$
\end{definition}

\begin{notice}
\label{comprem1}
For any positive $p$, a bounded set $A \in E$ is equinormed if and only if $$\forall \varepsilon >0 \; \exists i \in I \; \forall x \in A \; ||x||^p \le ||x||_i^p+\varepsilon.$$ This follows from the fact that the power function $x \mapsto x^p$ is uniformly continuous on bounded sets.
\end{notice}

The following theorem is given in \cite{gkm} as Theorem 3.19.

\begin{theorem}
\label{compth1}
For any regular family of seminorms, a non-empty set $A \subset E$ is precompact when and only when $A-A$ is equinormed and $A$ is bounded.
\end{theorem}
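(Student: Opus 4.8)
The plan is to prove the two implications separately, invoking each of the three defining properties of a regular family exactly once: property~(1) to pass between $\|\cdot\|$ and the seminorms $\|\cdot\|_i$, property~(2) to collapse finitely many indices into one dominating index, and property~(3) to extract a subsequence that is Cauchy in all seminorms simultaneously.

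For the ``only if'' direction I would start from a precompact $A$. Boundedness is then immediate, so the real work is to show $A-A$ is equinormed. Given $\varepsilon>0$, I would fix a finite subset $\{x_1,\ldots,x_n\}$ of $A$ whose $\tfrac{\varepsilon}{5}$-balls cover $A$, form the finite set $D=\{x_j-x_k:1\le j,k\le n\}$, and observe that every element of $A-A$ lies within $\tfrac{2\varepsilon}{5}$ of some point of $D$. Property~(1) gives, for each $d\in D$, an index $i_d$ with $\|d\|\le\|d\|_{i_d}+\tfrac{\varepsilon}{5}$; finitely many applications of property~(2) then yield a single index $i^{*}$ with $\|z\|_{i^{*}}\ge\|z\|_{i_d}$ for all $z\in E$ and all $d\in D$, so that $\|d\|\le\|d\|_{i^{*}}+\tfrac{\varepsilon}{5}$ for every $d\in D$. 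Finally, for an arbitrary $x-y\in A-A$ I would pick $d\in D$ with $\|(x-y)-d\|<\tfrac{2\varepsilon}{5}$ and run the triangle inequality twice, once for $\|\cdot\|$ and once for $\|\cdot\|_{i^{*}}$ (using $\|\cdot\|_{i^{*}}\le\|\cdot\|$, which is property~(1)), to conclude $\|x-y\|\le\|x-y\|_{i^{*}}+\varepsilon$. That is exactly the equinormedness of $A-A$.

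For the ``if'' direction I would assume $A$ bounded and $A-A$ equinormed, and argue by contradiction. If $A$ were not precompact there would be an $\varepsilon>0$ and a sequence $(x_n)$ in $A$ with $\|x_n-x_m\|\ge\varepsilon$ for all $n\ne m$. Boundedness of $A$ lets me invoke property~(3) to pass to a subsequence $(x_{n_k})$ that is Cauchy in every seminorm. Applying the equinormedness of $A-A$ with parameter $\tfrac{\varepsilon}{2}$ produces an index $i$ such that $\|z\|\le\|z\|_i+\tfrac{\varepsilon}{2}$ for all $z\in A-A$; since $(x_{n_k})$ is Cauchy in $\|\cdot\|_i$ there are $k\ne l$ with $\|x_{n_k}-x_{n_l}\|_i<\tfrac{\varepsilon}{2}$, whence $\|x_{n_k}-x_{n_l}\|<\varepsilon$, contradicting the separation.

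Neither implication is deep, and I expect the only place demanding genuine care to be the ``only if'' direction: one must juggle the finite net, the finite family of indices it spawns, the dominating index from directedness, and two triangle inequalities at once, all while keeping the constants summing to $\varepsilon$. The ``if'' direction is essentially one line once property~(3) is available. As a stylistic alternative, the forward direction could be reorganised around the lemma that a finite union of equinormed sets is equinormed, together with the remark that $A-A$ is, up to an arbitrarily small uniform perturbation, a finite set; but the direct estimate above seems the most economical route.
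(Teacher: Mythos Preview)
Your argument is correct in both directions, and the bookkeeping with the $\tfrac{\varepsilon}{5}$ constants in the forward direction checks out: from $\|(x-y)-d\|<\tfrac{2\varepsilon}{5}$ one gets $\|x-y\|<\tfrac{2\varepsilon}{5}+\|d\|_{i^*}+\tfrac{\varepsilon}{5}$ and $\|d\|_{i^*}\le\|(x-y)-d\|_{i^*}+\|x-y\|_{i^*}<\tfrac{2\varepsilon}{5}+\|x-y\|_{i^*}$, so the five fifths add up exactly.

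However, there is nothing to compare against: the paper does not prove this theorem at all. It is stated with the sentence ``The following theorem is given in \cite{gkm} as Theorem~3.19'' and then used as a black box. So your proposal is not a reconstruction of the paper's argument but rather a self-contained proof of a result the paper imports from an external reference. In that sense it supplies more than the paper does, and the approach you take (finite net plus directedness for the forward direction, separated sequence plus property~(3) for the converse) is the natural one and presumably close to what appears in the cited source.
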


Now we need to define a regular family of seminorms for a Lorentz space $L_{p, w}$. This family will be indexed by $i \in \mathbb{N}$ and defined as $||a||_{p, w, i}=||r_i(a)||_{p, w}$, where $r_i: L_{p, w} \to L_{p, w}$ is a linear projection given by $$r_i(a)=(a_1, a_2, ..., a_i, 0, 0, ...).$$

\begin{notice}
$r_i(L_{p, w}) \subset L_{p, w}$ is a subset closed under addition and scalar multiplication, and there occurs $a \in r_i(L_{p, w}) \implies ||a||_{p, w}=||a||_{p, w, i}$. Herefrom it follows instantly that $||\cdot||_{p, w, i}$ is a seminorm. Its value is achieved on some permutation of the set $\{1,...,i\}$.
\end{notice}

At this point it is necessary to show that this family is regular.

\begin{lemma}[Second condition of regularity]
For any appropriate $a$, $p$, $w$, $n$ there occurs $||a||_{p, w, n} \le ||a||_{p, w, n+1}$ and hence the family $\{||\cdot||_{p, w, i}\}_{i \in \mathbb{N}}$ fulfils the second condition of regularity.
\end{lemma}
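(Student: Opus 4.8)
The plan is to extract a simple monotonicity principle for the Lorentz norm and read off both assertions from it. The underlying elementary fact is that the projections satisfy the entrywise domination $|(r_n(a))_j| \le |(r_{n+1}(a))_j|$ for every $j \in \mathbb{N}$: the sequences $r_n(a)$ and $r_{n+1}(a)$ coincide in the coordinates $1,\dots,n$ and in the coordinates $>n+1$ (where both are zero), while in coordinate $n+1$ one only replaces $0$ by $|a_{n+1}| \ge 0$.

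First I would check that such a domination is preserved by the construction $L(\cdot,p,w,\sigma)$ and by the $p$-norm. Fix a permutation $\sigma \in \Pi$. If $b,c \in \mathbb{R}^{\mathbb{N}}$ satisfy $|b_j| \le |c_j|$ for all $j$, then for each $i$ one has $|L(b,p,w,\sigma)_i| = |b_{\sigma_i}|\sqrt[p]{w_i} \le |c_{\sigma_i}|\sqrt[p]{w_i} = |L(c,p,w,\sigma)_i|$, hence $\|L(b,p,w,\sigma)\|_p \le \|L(c,p,w,\sigma)\|_p$ by monotonicity of the $p$-norm in the absolute values of the coordinates. Taking the supremum over $\sigma \in \Pi$ and invoking the definition of $\|\cdot\|_{p,w}$ gives $\|b\|_{p,w} \le \|c\|_{p,w}$. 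No convergence issue arises in our application, since $r_n(a)$ and $r_{n+1}(a)$ have only finitely many nonzero entries, so all the series involved are finite sums. Applying this with $b = r_n(a)$ and $c = r_{n+1}(a)$ yields $\|a\|_{p,w,n} = \|r_n(a)\|_{p,w} \le \|r_{n+1}(a)\|_{p,w} = \|a\|_{p,w,n+1}$, which is the first claim.

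For the second condition of regularity from Definition \ref{compdef2}, I would simply iterate this inequality: given $i,j \in \mathbb{N}$, put $k \zdef \max(i,j)$, and applying the inequality just obtained repeatedly shows $\|x\|_{p,w,i} \le \|x\|_{p,w,k}$ and $\|x\|_{p,w,j} \le \|x\|_{p,w,k}$ for every $x \in L_{p,w}$, which is precisely the directedness required.

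I do not anticipate a real obstacle here. The only thing to be careful about is to perform the comparison for a single fixed $\sigma$ before passing to suprema — so that at each stage one is comparing $L(r_n(a),p,w,\sigma)$ with $L(r_{n+1}(a),p,w,\sigma)$ for the same $\sigma$ — and to note that the argument works verbatim with the supremum taken over $\Pi$, so that neither the $\Psi$-reformulation nor attainment of the supremum (Theorem \ref{basicth2}) is needed.
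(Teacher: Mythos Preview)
Your argument is correct, but it takes a different route from the paper. The paper picks a permutation $\sigma$ on which the supremum for $r_n(a)$ is attained (using the earlier remark that for finitely supported sequences the Lorentz norm is realised on a permutation of $\{1,\dots,n\}$), extends it by $\sigma_{n+1}=n+1$, and then compares directly with the maximising permutation $\psi$ for $r_{n+1}(a)$ via the chain $\sum_{i=1}^n |a_{\sigma_i}|^p w_i \le \sum_{i=1}^{n+1} |a_{\sigma_i}|^p w_i \le \sum_{i=1}^{n+1} |a_{\psi_i}|^p w_i$. You instead isolate a general monotonicity lemma --- entrywise domination $|b_j|\le|c_j|$ implies $\|b\|_{p,w}\le\|c\|_{p,w}$ --- by comparing for a \emph{fixed} $\sigma$ and only then passing to the supremum. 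Your approach is slightly more elementary in that it needs neither attainment of the supremum nor any special choice of permutation, and the monotonicity lemma is a reusable fact; the paper's version is more hands-on with the maximisers and stays closer to the concrete combinatorics used elsewhere in the article.
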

\begin{proof}
Let $\sigma$ be a permutation which achieves the supremum for $r_n(a)$ and $\psi$ for $r_{n+1}(a)$. $\sigma$ and $\psi$ can be chosen so that $\sigma_{n+1}=n+1$, $\sigma(\{1,...,n\})=\{1,...,n\}$ and $\psi(\{1,...,n+1\})=\{1,...,n+1\}$. Then it suffices to show that $$\sum_{i=1}^\infty |r_n(a)_{\sigma_i}|^p \cdot w_i \le \sum_{i=1}^\infty |r_{n+1}(a)_{\psi_i}|^p \cdot w_i.$$ And indeed, $$\sum_{i=1}^\infty |r_n(a)_{\sigma_i}|^p \cdot w_i = \sum_{i=1}^n |a_{\sigma_i}|^p \cdot w_i \le \sum_{i=1}^{n+1} |a_{\sigma_i}|^p \cdot w_i \le \sum_{i=1}^{n+1} |a_{\psi_i}|^p \cdot w_i = \sum_{i=1}^\infty |r_{n+1}(a)_{\psi_i}|^p \cdot w_i.$$
\end{proof}

\begin{lemma}[First condition of regularity]
For any appropriate $a$, $p$, $w$, $n$ there occurs $||a||_{p, w}=\lim_{n \to \infty} ||a||_{p, w, n}$.
\end{lemma}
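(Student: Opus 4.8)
The plan is to recast the Lorentz norm as a supremum of \emph{finite} partial sums and then compare it termwise with the truncated norms; this treats $a \in L_{p, w}$ and $a \notin L_{p, w}$ simultaneously. First I would note that, since every term $|a_{\sigma_i}|^p w_i$ is nonnegative, each infinite series is the supremum of its own partial sums, so recalling that $||a||_{p, w} = \sup_{\sigma \in \Psi} ||L(a, p, w, \sigma)||_p$ (the first theorem above) gives
$$||a||_{p, w}^p = \sup_{\sigma \in \Psi} \sum_{i=1}^\infty |a_{\sigma_i}|^p \cdot w_i = \sup\Bigl\{\, \sum_{i=1}^N |a_{\sigma_i}|^p \cdot w_i \;:\; N \in \mathbb{N},\ \sigma \in \Psi \,\Bigr\} =: S \in [0, \infty].$$
The same observation applied to $r_n(a)$ yields $||a||_{p, w, n}^p = \sup\{\sum_{i=1}^N |r_n(a)_{\tau_i}|^p w_i : N \in \mathbb{N},\ \tau \in \Psi\}$, and each $||a||_{p, w, n}$ is finite because $r_n(a)$ has at most $n$ nonzero coordinates.

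Next I would establish the easy inequality $\sup_n ||a||_{p, w, n}^p \le S$: for every $n$, $\tau \in \Psi$, $N \in \mathbb{N}$ we have $|r_n(a)_j| \le |a_j|$ for all $j$, hence $\sum_{i=1}^N |r_n(a)_{\tau_i}|^p w_i \le \sum_{i=1}^N |a_{\tau_i}|^p w_i \le S$, and taking suprema over $N$, then $\tau$, then $n$ gives the claim. For the reverse inequality I would fix $N \in \mathbb{N}$ and $\sigma \in \Psi$, set $M := \max\{\sigma_1, \dots, \sigma_N\}$, and note that $r_M(a)_{\sigma_i} = a_{\sigma_i}$ for $i \le N$ since $\sigma_i \le M$. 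Extending the finite injective string $(\sigma_1, \dots, \sigma_N)$ to a full injective map $\tau \in \Psi$ — e.g.\ sending $N+1, N+2, \dots$ bijectively onto $\mathbb{N} \setminus \{\sigma_1, \dots, \sigma_N\}$ — I obtain
$$\sup_n ||a||_{p, w, n}^p \ge ||a||_{p, w, M}^p = ||r_M(a)||_{p, w}^p \ge \sum_{i=1}^\infty |r_M(a)_{\tau_i}|^p \cdot w_i \ge \sum_{i=1}^N |a_{\sigma_i}|^p \cdot w_i,$$
so $\sup_n ||a||_{p, w, n}^p$ dominates every partial sum occurring in the definition of $S$, i.e.\ $\sup_n ||a||_{p, w, n}^p \ge S$.

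Combining the two bounds gives $\sup_n ||a||_{p, w, n}^p = S = ||a||_{p, w}^p$; since the preceding lemma shows $(||a||_{p, w, n})_n$ is nondecreasing, its supremum equals its limit, and taking $p$-th roots (which is legitimate even when the common value is $+\infty$) yields $||a||_{p, w} = \lim_{n \to \infty} ||a||_{p, w, n}$. The hard part is really just the bookkeeping in the reverse inequality: choosing the truncation level $M$ large enough that the first $N$ selected coordinates are retained, and checking that a finite injective tuple always extends to an element of $\Psi$. Both are elementary, and crucially neither step uses $a \in L_{p, w}$, so the divergent case is covered automatically.
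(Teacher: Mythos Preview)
Your argument is correct, and it proceeds by a genuinely different route from the paper. The paper's proof leans on Theorem~\ref{basicth2}: it fixes an optimal injective map $\sigma$ realising $\|a\|_{p,w}$ and optimal maps $\sigma^{(n)}$ for each $r_n(a)$, then sandwiches $\|a\|_{p,w,n}^p=\sum_{i=1}^n |a_{\sigma_i^{(n)}}|^p w_i$ between $\sum_{i=1}^n |a_{\sigma_i}|^p w_i\,\chi(\sigma_i\le n)$ from below and $\sum_{i=1}^\infty |a_{\sigma_i}|^p w_i$ from above. You instead bypass the optimal-map machinery altogether by rewriting $\|a\|_{p,w}^p$ as a supremum of \emph{finite} partial sums over $\Psi\times\mathbb{N}$ and then matching each such partial sum against a suitably large truncation $r_M(a)$. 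This buys you two things: the proof is self-contained (no appeal to the existence of a maximising $\sigma$), and it automatically covers the divergent case $a\notin L_{p,w}$, which the paper's version cannot handle as written since Theorem~\ref{basicth2} is stated only for $a\in L_{p,w}$. The price is a slightly longer bookkeeping step; the paper's version is terser precisely because the hard work was front-loaded into Theorem~\ref{basicth2}.
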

\begin{proof}
Let $\sigma$ be an injective map which achieves the supremum for $a$ and $\sigma^{(n)}$ for $r_{n}(a)$. We need to prove that $$\lim_{n \to \infty} \sum_{i=1}^n |a_{\sigma_i^{(n)}}|^p \cdot w_i = \sum_{i=1}^\infty |a_{\sigma_i}|^p \cdot w_i.$$ Let $\chi$ denote the characteristic function (equal $1$ if the expression in brackets is true and $0$ otherwise). Obviously, $$\lim_{n \to \infty} \sum_{i=1}^n |a_{\sigma_i^{(n)}}|^p \cdot w_i \ge \lim_{n \to \infty} \sum_{i=1}^n |a_{\sigma_i}|^p \cdot w_i \cdot \chi(\sigma_i \le n) = \sum_{i=1}^\infty |a_{\sigma_i}|^p \cdot w_i.$$ On the other hand, $$\forall n \in \mathbb{N} \; \sum_{i=1}^n |a_{\sigma_i^{(n)}}|^p \cdot w_i \le \sum_{i=1}^\infty |a_{\sigma_i^{(n)}}|^p \cdot w_i \le \sum_{i=1}^\infty |a_{\sigma_i}|^p \cdot w_i.$$ Since inequality is preserved in the limit, this completes the proof.
\end{proof}

\begin{lemma}[Third condition of regularity]
Every bounded sequence in $L_{p, w}$ has a subsequence that is Cauchy for all seminorms.
\end{lemma}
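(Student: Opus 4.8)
The plan is to extract, from any bounded sequence $(a^{(k)})_{k\in\mathbb N}\subset L_{p,w}$, a subsequence that converges coordinatewise, and then to argue that coordinatewise convergence of a bounded sequence already forces convergence in each seminorm $\|\cdot\|_{p,w,i}$, which only sees the first $i$ coordinates. First I would use the fact that $\|a^{(k)}\|_{p,w}\le M$ for all $k$ implies $|a^{(k)}_j|\le M$ for every coordinate $j$ (since the single-term rearrangement gives $|a_j|^p w_1=|a_j|^p\le\|a\|_{p,w}^p$). Hence each coordinate sequence $(a^{(k)}_j)_k$ is bounded in $\mathbb R$, and by a diagonal argument one picks a subsequence $(a^{(k_m)})_m$ such that $a^{(k_m)}_j\to b_j$ as $m\to\infty$ for every $j\in\mathbb N$ simultaneously.

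Next I would fix $i\in\mathbb N$ and show $(a^{(k_m)})_m$ is Cauchy with respect to $\|\cdot\|_{p,w,i}$. By the Remark preceding this lemma, $\|\cdot\|_{p,w,i}$ depends only on $r_i(a)=(a_1,\dots,a_i,0,0,\dots)$, and its value is achieved on some permutation of $\{1,\dots,i\}$; thus
$$\|a^{(k_m)}-a^{(k_{m'})}\|_{p,w,i}^p=\max_{\pi}\sum_{j=1}^{i}\bigl|a^{(k_m)}_{\pi(j)}-a^{(k_{m'})}_{\pi(j)}\bigr|^p w_j\le\sum_{j=1}^{i}\bigl|a^{(k_m)}_{j}-a^{(k_{m'})}_{j}\bigr|^p.$$
Since each of the finitely many coordinate sequences $(a^{(k_m)}_j)_m$, $1\le j\le i$, is Cauchy in $\mathbb R$, the right-hand side tends to $0$ as $m,m'\to\infty$; hence $(a^{(k_m)})_m$ is $\|\cdot\|_{p,w,i}$-Cauchy. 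As $i$ was arbitrary, the single subsequence $(a^{(k_m)})_m$ is Cauchy for every seminorm in the family, which is exactly the third condition of regularity.

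I do not expect a serious obstacle here: the content is the standard ``bounded plus finite-dimensional cross-sections'' compactness trick, and the only points requiring care are (i) justifying the coordinate bound $|a^{(k)}_j|\le\|a^{(k)}\|_{p,w}$ from the definition of the Lorentz norm, and (ii) making the diagonal extraction produce \emph{one} subsequence that works for all coordinates at once, so that the resulting subsequence is simultaneously Cauchy for all $\|\cdot\|_{p,w,i}$. If anything is mildly delicate it is keeping the quantifiers straight: the subsequence is chosen once and for all before $i$ is fixed, and then the Cauchy estimate above is uniform over the (finitely many) permutations of $\{1,\dots,i\}$, which is what lets us bound the seminorm difference by a fixed finite sum of coordinate differences.
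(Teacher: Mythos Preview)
Your argument is correct, and it takes a somewhat different route from the paper's. The paper first fixes a single seminorm $\|\cdot\|_{p,w,n}$ and, by pigeonholing on the finitely many permutations of $\{1,\dots,n\}$, passes to a subsequence on which one fixed permutation realises the seminorm; it then interprets that seminorm as a genuine norm on $\mathbb R^n$ and invokes equivalence of norms to obtain a Cauchy subsequence, and finally diagonalizes over $n$. You instead diagonalize once at the level of coordinates (using $|a_j|\le\|a\|_{p,w}$, which follows from $w_1=1$) and then control every seminorm simultaneously by the single elementary bound $\|c\|_{p,w,i}^p\le\sum_{j=1}^i|c_j|^p$, valid because $w_j\le1$ and the optimal permutation merely reshuffles the same $i$ terms. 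Your approach is shorter and avoids both the pigeonhole step and the appeal to norm equivalence; the paper's approach, on the other hand, makes the finite-dimensional nature of each seminorm more explicit. Either way the content is the same ``finite-dimensional cross-section plus diagonal'' idea, just sliced differently: you diagonalize over coordinates first, the paper diagonalizes over seminorms.
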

\begin{proof}
As a minor lemma, let us prove the thesis for a particular seminorm $||\cdot||_{p, w, n}$, for a fixed $n \in \mathbb{N}$. We consider a sequence $(a_1, a_2, a_3, ...)$ where each element $a_i = (a_{i, 1}, a_{i, 2}, a_{i, 3}, ...) \in L_{p, w}$. Let $\sigma^{(i)}$ denote the permutation with which the supremum for $r_n(a_i)$ is attained. As we can see, $\sigma^{(i)}(\{1,...,n\})=\{1,...,n\}$ and we may associate $a_i$ with a permutation of the finite set $\{1,...,n\}$. Since there are finitely many (namely $n!$) such permutations, one of them must be chosen for infinitely many elements of the sequence $(a_i)$. This subsequence will be denoted as $(a_{n_1}, a_{n_2}, a_{n_3}, ...)$ and we can write a universal formula $||a_{n_j}||_{p, w, n}=(\sum_{i=1}^n |a_{n_j, \sigma_i}|^p \cdot w_i)^{\frac{1}{p}}$. Now this can be interpreted as a norm in $\mathbb{R}^n$, and because all norms in $\mathbb{R}^n$ are equivalent, it has the desired property that a bounded sequence has a Cauchy subsequence. Being bounded in $||\cdot||_{p, w}$ implies being bounded in all $||\cdot||_{p, w, n}$, so the proof of the lemma is finished.
\newline
\newline
Now it is possible to use the diagonal argument to extend the minor lemma to all seminorms. From any bounded sequence $(a_1, a_2, a_3, ...)$ in $L_{p, w}$ we can choose a subsequence $(a_{b_1}, a_{b_2}, a_{b_3}, ...)$ which is Cauchy with respect to $||\cdot||_{p, w, 1}$, from it a subsubsequence $(a_{b_{c_1}}, a_{b_{c_2}}, a_{b_{c_3}}, ...)$ which is Cauchy also with respect to $||\cdot||_{p, w, 2}$, and proceed so \emph{ad infinitum}. The cross-subsequence $(a_1, a_{b_2}, a_{b_{c_3}}, ...)$ fulfils the condition.
\end{proof}

\begin{notice}[Equinormedness]
With our family of seminorms, Definition \ref{compdef3} can be trivially interpreted as stating that a set $A \in L_{p, w}$ is equinormed when $\lim_{i \to \infty} ||a||_{p, w, i}=||a||_{p, w}$ uniformly on $A$. And equivalently, by Remark \ref{comprem1}, when $\lim_{i \to \infty} ||a||_{p, w, i}^p=||a||_{p, w}^p$ uniformly on $A$.
\end{notice}

\begin{corollary}
A set $A \subset L_{p, w}$ is precompact if and only if it is bounded in $||\cdot||_{p, w}$ and the set $A-A$ is equinormed with respect to $||\cdot||_{p, w, i}$. This is a direct consequence of Theorem \ref{compth1}.
\end{corollary}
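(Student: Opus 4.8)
The plan is to assemble the pieces already established and invoke Theorem \ref{compth1}; no new argument is really needed. First I would observe that the three preceding lemmas verify, one by one, the three conditions of Definition \ref{compdef2} for the family $\{||\cdot||_{p, w, i}\}_{i \in \mathbb{N}}$ on the normed space $(L_{p, w}, ||\cdot||_{p, w})$. The ``First condition of regularity'' lemma gives $||a||_{p, w} = \lim_{n \to \infty} ||a||_{p, w, n}$; combined with the ``Second condition of regularity'' lemma, which shows the sequence $n \mapsto ||a||_{p, w, n}$ is nondecreasing, this yields $||a||_{p, w} = \sup_{i \in \mathbb{N}} ||a||_{p, w, i}$, i.e.\ condition (1). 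The same monotonicity makes the family directed --- given $i, j \in \mathbb{N}$ take $k = \max(i, j)$ --- which is condition (2). The ``Third condition of regularity'' lemma is condition (3) verbatim. Since the earlier remark records that each $||\cdot||_{p, w, i}$ is indeed a seminorm and that $r_i$ maps $L_{p, w}$ into itself, the family is regular.

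Second, I would apply Theorem \ref{compth1} with $E = L_{p, w}$ and this regular family. It states that a non-empty $A \subset L_{p, w}$ is precompact if and only if $A$ is bounded in $||\cdot||_{p, w}$ and $A - A$ is equinormed. By Definition \ref{compdef3}, ``$A - A$ equinormed'' is precisely ``$A - A$ equinormed with respect to $||\cdot||_{p, w, i}$'' as phrased in the statement, so the corollary follows. For completeness I would add the trivial remark that the empty set is vacuously precompact, bounded and equinormed, covering the one case not addressed by Theorem \ref{compth1}.

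I do not anticipate any genuine obstacle here: all the substance sits in the three regularity lemmas and in the imported Theorem \ref{compth1}, so the corollary is a one-line deduction once regularity is cited. The only points deserving a second look are standing assumptions already in force --- that $||\cdot||_{p, w}$ really is a norm, so that $L_{p, w}$ qualifies as the normed space $E$ demanded by Theorem \ref{compth1}, and that the projections $r_i$ land inside $L_{p, w}$ --- and both are recorded in the earlier definitions and remarks. Optionally one could also restate, using the preceding Remark on equinormedness, that the criterion says exactly that $||a||_{p, w, i}^p \to ||a||_{p, w}^p$ uniformly over $a \in A - A$, which is the form most convenient for applications.
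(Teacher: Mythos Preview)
Your proposal is correct and matches the paper's approach exactly: the paper offers no proof beyond the sentence ``This is a direct consequence of Theorem \ref{compth1}'', relying on the three preceding regularity lemmas just as you do. Your write-up simply makes explicit the verification of conditions (1)--(3) and the empty-set triviality that the paper leaves implicit.
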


\section{Main result}

Now we have the necessary tools to begin proving the main theorem of this article, namely that a set $A \subset L_{p, w}$ is precompact if and only if it is bounded in $||\cdot||_{p, w}$ and equinormed with respect to $||\cdot||_{p, w, i}$. Therefore, the core of the proof is to show that the equinormedness of $A$ implies the equinormedness of $A-A$ (the reverse is much easier). This will be done through a series of lemmas, showing the uniform properties of $A$ and acquiring other interesting observations in the meantime.

\begin{lemma}[Sequences $a \in A$ have a bounded number of element repetitions]
\label{mainlem1}
If $A$ is bounded, then for any $d>0$ there is such a number $\lambda(d)$ that no sequence in $A$ has more than $\lambda(d)$ elements greater or equal $d$ in their absolute values, or in other words $$\forall d>0 \; \exists \lambda(d) \in \mathbb{N} \; \forall a \in A \; |a_{\sigma_{\lambda(d)+1}}|<d,$$ where, by our previous notation, $\sigma$ is a nonincreasing natural injective map.
\end{lemma}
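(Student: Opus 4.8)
The plan is to pit the defining property $\sum_{i=1}^\infty w_i = +\infty$ of a weight sequence against the boundedness of $A$. Write $M \zdef \sup_{a \in A} ||a||_{p, w}$, which is finite precisely because $A$ is bounded in $||\cdot||_{p, w}$.

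First I would reformulate the claim contrapositively: it is enough to check that if a sequence $a$ has at least $\lambda + 1$ entries whose absolute values are $\geq d$, then necessarily $||a||_{p, w}^p \geq d^p \sum_{i=1}^{\lambda + 1} w_i$. This is immediate from Theorem \ref{basicth2}: the supremum defining $||a||_{p, w}$ is attained at the nonincreasing natural injective map $\sigma$, so $|a_{\sigma_1}| \geq |a_{\sigma_2}| \geq \cdots$, and having at least $\lambda + 1$ entries of absolute value $\geq d$ forces $|a_{\sigma_i}| \geq d$ for every $i \leq \lambda + 1$. Hence
$$||a||_{p, w}^p = \sum_{i=1}^\infty |a_{\sigma_i}|^p \cdot w_i \geq \sum_{i=1}^{\lambda + 1} |a_{\sigma_i}|^p \cdot w_i \geq d^p \sum_{i=1}^{\lambda + 1} w_i.$$

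Second, since the partial sums $\sum_{i=1}^N w_i$ increase to $+\infty$, I can choose $\lambda(d) \in \mathbb{N}$ so large that $d^p \sum_{i=1}^{\lambda(d) + 1} w_i > M^p$. Then no $a \in A$ can have more than $\lambda(d)$ entries of absolute value $\geq d$: otherwise the inequality above would yield $||a||_{p, w}^p > M^p$, contradicting $||a||_{p, w} \leq M$. Equivalently, $|a_{\sigma_{\lambda(d) + 1}}| < d$ for every $a \in A$, which is the assertion.

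I do not anticipate a genuine obstacle — the argument is a one-line estimate once the right quantity is isolated, and the whole content is the divergence of $\sum w_i$. The only points worth a remark are the degenerate cases: if $A = \{0\}$ then $M = 0$ and any $\lambda(d)$ works; and if some $a \in A$ has only finitely many nonzero entries the estimate is unaffected, since it uses only the first $\lambda(d) + 1$ terms of the rearrangement, consistently with the convention adopted in the proof of Theorem \ref{basicth2}.
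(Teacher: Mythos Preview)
Your argument is correct and follows essentially the same approach as the paper: both derive the estimate $||a||_{p,w}^p \ge d^p \sum_{i=1}^{n} w_i$ whenever $a$ has at least $n$ entries of absolute value $\ge d$, and then play this off against the divergence of $\sum w_i$ and the bound $M$ on $A$. Your version is slightly more carefully written (you correctly compare with $M^p$ rather than $M$), but the idea is identical.
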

\begin{proof}
Since $A$ is bounded, let $M$ be its upper bound in $||\cdot||_{p, w}$. Trying to assume that for some $d$ there is no such upper bound $\lambda(d)$ on the amount of repetitions, we get $$\forall n \in \mathbb{N} \; \exists a \in A \; M \ge ||a||_{p, w} \ge d^p \cdot \sum_{i=1}^n w_i.$$ Hence for any $n$ there would occur $\frac{M}{d^p} \ge \sum_{i=1}^n w_i$, and yet we know that the series of the weight sequence $w$ is divergent. This proves the lemma by contradiction.
\end{proof}

For clarity of notation, with a pre-determined weight sequence $w$ and power $p$, let us denote $||a||_{p, w, i}^p$ as $S_i(a)$. When $A$ is equinormed with respect to $||\cdot||_{p, w, i}$, it means that $$\forall \varepsilon>0 \; \exists n(\varepsilon) \in \mathbb{N} \; \forall a \in A \; ||a||_{p, w}^p-S_{n(\varepsilon)}(a)<\varepsilon.$$ Hereforth, by convention, we will denote by $n(\varepsilon)$ the lowest natural number having said property.
\newline
\newline
In the following lemma, we will split our hypothesis into two possible cases and prove for one of them that $A-A$ is equinormed, so that we will be able to use the other case as an assumption to continue the proof.

\begin{lemma}
\label{mainlem2}
If $\exists m \in \mathbb{N} \; \forall \varepsilon >0 \; n(\varepsilon) \le m$, then $A-A$ is equinormed.
\end{lemma}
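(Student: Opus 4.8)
The plan is to extract from the hypothesis a single index $m$ that certifies the equinormedness of $A$ \emph{exactly} (not merely up to $\varepsilon$), and then to show that this forces every $a\in A$ to vanish beyond its $m$-th coordinate, so that $A-A$ lies inside the subspace $r_m(L_{p,w})$ on which $\|\cdot\|_{p,w}$ and $\|\cdot\|_{p,w,m}$ coincide.

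First I would note that, by the lemma on the second condition of regularity, $S_n(a)=\|a\|_{p,w,n}^p$ is nondecreasing in $n$, and by the lemma on the first condition of regularity $S_n(a)\le\|a\|_{p,w}^p$ for every $n$. Hence if $n(\varepsilon)\le m$ for every $\varepsilon>0$, then for every $\varepsilon>0$ and every $a\in A$ we get $\|a\|_{p,w}^p-S_m(a)\le\|a\|_{p,w}^p-S_{n(\varepsilon)}(a)<\varepsilon$; letting $\varepsilon\to 0^+$ yields $\|a\|_{p,w}^p=S_m(a)=\|r_m(a)\|_{p,w}^p$ for all $a\in A$.

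The crux is then to show that $\|a\|_{p,w}^p=\|r_m(a)\|_{p,w}^p$ implies $a=r_m(a)$. Suppose to the contrary that $a_k\neq 0$ for some $k>m$. By the remark following the definition of the seminorm family, there is a permutation $\tau$ with $\tau(\{1,\dots,m\})=\{1,\dots,m\}$ and $S_m(a)=\sum_{i=1}^{m}|a_{\tau_i}|^p w_i$. Define $\sigma'\in\Psi$ by $\sigma'_i=\tau_i$ for $i\le m$, $\sigma'_{m+1}=k$, and by letting $\sigma'_i$ for $i>m+1$ enumerate the remaining indices injectively. Since all the summands in $\|L(a,p,w,\sigma')\|_p^p$ are nonnegative,
$$\|a\|_{p,w}^p\ \ge\ \sum_{i=1}^{m+1}|a_{\sigma'_i}|^p w_i\ =\ S_m(a)+|a_k|^p\, w_{m+1}\ >\ S_m(a),$$
because $w_{m+1}>0$ and $a_k\neq 0$, contradicting $\|a\|_{p,w}^p=S_m(a)$. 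Therefore $a_k=0$ for all $k>m$, i.e. $a=r_m(a)$.

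Finally, $A\subset r_m(L_{p,w})$ and, $r_m$ being linear, $A-A\subset r_m(L_{p,w})$ as well; for any $x$ in this set $r_m(x)=x$, so $\|x\|_{p,w,m}=\|r_m(x)\|_{p,w}=\|x\|_{p,w}$. Thus in Definition \ref{compdef3} the index $i=m$ works for every $\varepsilon>0$, and $A-A$ is equinormed. The only delicate point is the middle step, but it is resolved in one line once one observes that appending the single surviving large coordinate $a_k$ to an optimal arrangement of $r_m(a)$ strictly increases the functional.
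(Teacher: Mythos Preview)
Your proof is correct and follows exactly the same line as the paper's: deduce $\|a\|_{p,w}^p=S_m(a)$ for all $a\in A$, infer $a_i=0$ for $i>m$, and conclude that $A-A$ inherits this property and is therefore equinormed. The paper simply asserts the middle step (``any nonzero values would increase the norm without impacting the seminorm''), whereas you spell it out with an explicit injective map; no substantive difference.
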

\begin{proof}
By the definition of $n(\varepsilon)$, this condition means that $$\exists n \in \mathbb{N} \; \forall a \in A \; ||a||_{p, w}^p=S_n(a).$$ This occurs if and only if $a_i=0$ for any $i>n$, because any nonzero values would increase the norm without impacting the seminorm. However, in this case the set $A-A$ holds the same property, so it is equinormed.
\end{proof}

Starting from now on, we will be able to use the assumption that $\forall m \in \mathbb{N} \; \exists \varepsilon >0 \; n(\varepsilon)>m$ (the index of seminorms increases as $\varepsilon$ approaches $0$).

\begin{lemma}[Sequences $a \in A$ approach $0$ uniformly]
\label{mainlem3}
If $A$ is bounded and equinormed, then $$\forall d>0 \; \exists N \in \mathbb{N} \; \forall a \in A \; \forall n \ge N \; |a_n|<d.$$
\end{lemma}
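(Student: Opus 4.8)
The plan is to argue by contraposition, using Lemma \ref{mainlem1} together with the finite-section seminorms $S_i$. Suppose the conclusion fails for some $d>0$; then there are $a\in A$ carrying entries of absolute value $\ge d$ arbitrarily far out in the sequence, and I want to convert a single such far-out large entry into a fixed positive gap between $\|a\|_{p,w}^p$ and $S_m(a)$ for every $m$ below its position, which will contradict equinormedness.

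Concretely, set $\lambda:=\lambda(d/2)$ from Lemma \ref{mainlem1}, so that every $a\in A$ has at most $\lambda$ coordinates of absolute value $\ge d/2$ (hence at most $\lambda$ of absolute value $\ge d$), and its nonincreasing rearrangement satisfies $|a_{\sigma_{\lambda+1}}|<d/2$. The key claim is: if $a\in A$, $n>m$ and $|a_n|\ge d$, then $\|a\|_{p,w}^p-S_m(a)\ge\tfrac12 w_\lambda d^p$. To see it, let $a'$ be $a$ with its $n$-th coordinate replaced by $0$. Since $n>m$, every coordinate of $r_m(a)$ is dominated in modulus by the corresponding coordinate of $a'$, so monotonicity of the Lorentz norm (immediate from its supremum definition) gives $S_m(a)=\|r_m(a)\|_{p,w}^p\le\|a'\|_{p,w}^p$. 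Writing $b_i:=|a_{\sigma_i}|^p$ for the nonincreasing rearrangement of $|a|^p$ and letting $j$ be the rearranged position of the $n$-th coordinate, Theorem \ref{basicth2} identifies the nonincreasing rearrangement of $|a'|^p$ as $(b_1,\dots,b_{j-1},b_{j+1},b_{j+2},\dots)$, whence
$$\|a\|_{p,w}^p-\|a'\|_{p,w}^p=\sum_{i=j}^{\infty}(b_i-b_{i+1})w_i.$$
Since positions $1,\dots,j$ of the rearrangement all have modulus $\ge|a_n|\ge d$, we get $j\le\lambda$; discarding the terms with $i>\lambda$, bounding $w_i\ge w_\lambda$, and telescoping leaves $w_\lambda(b_j-b_{\lambda+1})$, and here $b_j\ge d^p$ while $b_{\lambda+1}<(d/2)^p\le\tfrac12 d^p$ (as $p\ge1$), which proves the claim.

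With the claim in hand, equinormedness at the level $\varepsilon_0:=\tfrac12 w_\lambda d^p>0$ provides $n_0:=n(\varepsilon_0)$ with $\|a\|_{p,w}^p-S_{n_0}(a)<\varepsilon_0$ for every $a\in A$; the contrapositive of the claim, taken with $m=n_0$, then says that no $a\in A$ has a coordinate of absolute value $\ge d$ beyond index $n_0$, so $N:=n_0+1$ does the job.

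The step I expect to be the main obstacle is the lower bound $\|a\|_{p,w}^p-\|a'\|_{p,w}^p\ge w_\lambda(b_j-b_{\lambda+1})$: deleting one entry from a $w$-weighted nonincreasing sum decreases it by $\ge 0$ in general, and by an uncontrollably small amount if the tail of the rearrangement hugs the deleted value, so it is essential to invoke Lemma \ref{mainlem1} at the coarser level $d/2$ twice — once to force the deleted entry into the first $\lambda$ rearranged slots, and once to force the rearranged tail below $(d/2)^p$ from slot $\lambda+1$ onward — which is precisely what turns the telescoped remainder into a positive constant independent of $a$, $m$ and $n$.
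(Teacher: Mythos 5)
Your proof is correct and follows essentially the same route as the paper: both arguments use Lemma \ref{mainlem1} at the scale $d/2$ to confine the far-out large entry to the first $\lambda(d/2)$ slots of the nonincreasing rearrangement, telescope the weighted differences over those slots to obtain a uniform lower bound of order $d^p\,w_{\lambda(d/2)}$ on $||a||_{p,w}^p-S_m(a)$, and contradict equinormedness. Your device of passing through the auxiliary sequence $a'$ (zeroing the offending coordinate) together with the monotonicity $S_m(a)\le ||a'||_{p,w}^p$ is only a cleaner packaging of the paper's direct insertion-into-the-rearrangement estimate, not a different method.
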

\begin{proof}
As usual, we will conduct an indirect proof. Let us assume that $$\exists d>0 \; \forall N \in \mathbb{N} \; \exists a \in A \; \exists n \ge N \; |a_n| \ge d$$ in order to show that $A$ would not be equinormed. For this purpose, we want to estimate $||a||_{p, w}^p-S_k(a)$ for all values of $k \in \mathbb{N}$ and show that it cannot be arbitralily small. By Lemmas \ref{mainlem1}, \ref{mainlem2}, on assumption of the boundedness and equinormedness of $A$, it is possible to check only the values $k>\lambda(\frac{d}{2})$ whenever the expected difference $\varepsilon>0$ is small enough that $n(\varepsilon)>\lambda(\frac{d}{2})$. By the indirect assumption, we could choose a sequence $a \in A$ which has at least one element $|a_n| \ge d$ with $n \ge k$. Denoting by $k'< \lambda(d)$ the number of elements greater or equal $d$ among $\{|a_1|, |a_2|, ..., |a_k|\}$, and using the previous notation of $\sigma^{(k)}$ for a permutation which achieves the supremum for $r_k(a)$, there occurs $$||a||_{p, w}^p \ge |a_{\sigma_1^{(k)}}|^p w_1+|a_{\sigma_2^{(k)}}|^p w_2+...+|a_{\sigma_{k'}^{(k)}}|^p w_{k'}+d^p w_{k'+1}+|a_{\sigma_{k'+1}^{(k)}}|^p w_{k'+2}+...+|a_{\sigma_{k-1}^{(k)}}|^p w_k.$$ Thus, it is possible to lay out the estimation $$||a||_{p, w}^p-S_k(a) \ge (d^p-|a_{\sigma_{k'+1}^{(k)}}|^p) \cdot w_{k'+1}+(|a_{\sigma_{k'+1}^{(k)}}|^p-|a_{\sigma_{k'+2}^{(k)}}|^p) \cdot w_{k'+2}+...$$ $$+(|a_{\sigma_{\lambda(\frac{d}{2})}^{(k)}}|^p-|a_{\sigma_{\lambda(\frac{d}{2})+1}^{(k)}}|^p) \cdot w_{\lambda(\frac{d}{2})+1} \ge (d-\frac{d}{2^p}) \cdot w_{\lambda(\frac{d}{2})+1}.$$ It suffices to choose such $\varepsilon < (d-\frac{d}{2^p}) \cdot w_{\lambda(\frac{d}{2})+1}$ that $n(\varepsilon)>\lambda(\frac{d}{2})$ to see that for any appropriate $k$ and some $a \in A$ there is $||a||_{p, w}^p-S_k(a)> \varepsilon$. For this reason, $A$ cannot be equinormed after all, and it proves the lemma by contradiction.
\end{proof}

\begin{notice}
Lemma \ref{mainlem3} is a rather interesting result on its own. To see it, let us define a function $\gamma: \mathbb{R} \rightarrow \mathbb{N}$ where $\gamma(d)=N$ (the minimum value satisfying the condition from Lemma \ref{mainlem3}). As it turns out, its naturally defined reverse sequence $\gamma^{-1}(n)=\sup \{x \in \mathbb{R}: \gamma(x) \ge n\}$ is a majorizing sequence for all $a \in A$. However, there is no guarantee that $\gamma^{-1} \in L_{p, w}$.
\end{notice}

In order to proceed with the subsequent lemmas, it will be reasonable to introduce, for given values of $p$ and $w$, a few more convenient symbols similar to $S_i(a)$ for seminorms. For a given $i$, the sequence $a \in A \subset L_{p, w}$ can be naturally divided into its head $a_{(1...i)}$ and its tail $a_{(i+1...\infty)}=(a_{n+i})_{n \in \mathbb{N}}$. Now we can define the following concepts:

\begin{definition}[Tail complement of the seminorm]
\label{maindef1}
$$\tilde{S}_i(a)=||a_{(i+1...\infty)}||_{p, w_{(i+1...\infty)}}^p$$
This formula can be naturally described as \emph{tail complement of the seminorm} because $S_i(a)+\tilde{S}_i(a)$ is a certain reconfiguration of the sequence $a$ multiplied piecewise by the weight sequence $w$.
\end{definition}

\begin{definition}[Inverse function of $\sigma$]
For $\sigma \in \Psi$, we will use $\sigma^{-1}$ in its natural sense, as the inverse function of the map $\sigma$. By slight abuse of notation, for any indices $i \in \mathbb{N} \setminus \sigma(\mathbb{N})$ we assign $w_{\sigma_i^{-1}}=0$. Thereupon, any sum over $\sigma^{-1}(\mathbb{N})$ can be written without additional constraints, as any element outside the domain would be equal $0$ and thus would not influence the sum.
\end{definition}

The definitions given below, just like Definition \ref{maindef1}, are stated for a fixed $a \in L_{p, w}$. Whenever they refer to an injective map $\sigma$, it means the optimal map existing for $a$ by Theorem \ref{basicth2}, and "norm formula" means the formula defining the $p$-th power of the Lorentz norm.

\begin{definition}[Head of the norm]
$$H_i(a)=\sum_{j=1}^i |a_j|^p \cdot w_{\sigma_j^{-1}}$$
This is the part of the norm formula for sequence $a$ counted over its first $i$ elements.
\end{definition}

\begin{definition}[Tail of the norm]
$$\tilde{H}_i(a)=\sum_{j=i+1}^\infty |a_j|^p \cdot w_{\sigma_j^{-1}}$$
This is the other part of the norm formula. There occurs $H_i(a)+\tilde{H}_i(a)=||a||_{p, w}^p$.
\end{definition}

\begin{definition}[Head by weight of the norm]
$$W_i(a)=\sum_{j=1}^i |a_{\sigma_j}|^p \cdot w_j$$
This is the part of the norm formula for sequence $a$ counted over its first $i$ elements, ordered by weight.
\end{definition}

\begin{definition}[Tail by weight of the norm]
$$\tilde{W}_i(a)=\sum_{j=i+1}^\infty |a_{\sigma_j}|^p \cdot w_j$$
This is, once again, the other part of the norm formula. There occurs $W_i(a)+\tilde{W}_i(a)=||a||_{p, w}^p$.
\end{definition}

\begin{definition}[Norm of the tail]
$$T_i(a)=||a_{(i+1...\infty)}||_{p, w}^p$$
This differs from the tail complement of the seminorm in that all values from $w$ can be used again.
\end{definition}

\begin{lemma}[Relationships between the defined terms]
\label{mainlem4}
$$S_i(a)+\tilde{S}_i(a) \le ||a||_{p, w}^p=H_i(a)+\tilde{H}_i(a)=W_i(a)+\tilde{W}_i(a) \le S_i(a)+T_i(a)$$
\end{lemma}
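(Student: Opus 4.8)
The plan is to prove the two interior equalities first and then the two outer inequalities, using throughout Theorem~\ref{basicth2} (the $p$-th power of the Lorentz norm is a supremum over injective maps that is actually attained) together with one elementary monotonicity fact: if the positions $j_1<j_2<\dots$ are distinct positive integers then $j_\ell\ge\ell$, so by the nonincreasing monotonicity of $w$ we have $w_{j_\ell}\le w_\ell$; and applying a nonincreasing weight to fewer terms, or at larger positions, only decreases the corresponding sum.

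\emph{The equalities.} Let $\sigma\in\Psi$ be an optimal map for $a$, so $\|a\|_{p,w}^p=\sum_{j=1}^\infty |a_{\sigma_j}|^p w_j$. Splitting this series after the $i$-th term gives $W_i(a)+\tilde W_i(a)=\|a\|_{p,w}^p$ immediately. For the $H$-expression, substitute $k=\sigma_j$, i.e. $j=\sigma_k^{-1}$; since all terms are nonnegative the series may be reindexed freely, and with the convention $w_{\sigma_k^{-1}}=0$ for $k\notin\sigma(\mathbb{N})$ we obtain $\|a\|_{p,w}^p=\sum_{k=1}^\infty |a_k|^p w_{\sigma_k^{-1}}=H_i(a)+\tilde H_i(a)$.

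\emph{The left inequality $S_i(a)+\tilde S_i(a)\le\|a\|_{p,w}^p$.} Fix $\varepsilon>0$, pick a permutation $\pi$ of $\{1,\dots,i\}$ realising $S_i(a)=\sum_{k=1}^i |a_{\pi(k)}|^p w_k$, and pick an injective $\rho$ with $\sum_{m\ge1}|a_{i+\rho(m)}|^p w_{i+m}\ge\tilde S_i(a)-\varepsilon$. Define $\tilde\sigma$ by $\tilde\sigma(k)=\pi(k)$ for $k\le i$ and $\tilde\sigma(k)=i+\rho(k-i)$ for $k>i$; the two pieces have disjoint ranges $\{1,\dots,i\}$ and a subset of $\{i+1,i+2,\dots\}$, so $\tilde\sigma\in\Psi$. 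Plugging $\tilde\sigma$ into the norm formula gives $S_i(a)+\tilde S_i(a)-\varepsilon\le\sum_k |a_{\tilde\sigma(k)}|^p w_k\le\|a\|_{p,w}^p$, and $\varepsilon\to0$ finishes this part.

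\emph{The right inequality.} Since $W_i+\tilde W_i=\|a\|_{p,w}^p$ by the equalities just proved, it suffices to show $\|a\|_{p,w}^p\le S_i(a)+T_i(a)$. With $\sigma$ optimal for $a$, split the index set of positions into $P=\{j:\sigma_j\le i\}$ and $Q=\{j:\sigma_j>i\}$, so $\|a\|_{p,w}^p=\sum_{j\in P}|a_{\sigma_j}|^p w_j+\sum_{j\in Q}|a_{\sigma_j}|^p w_j$. Enumerate $P=\{j_1<j_2<\dots\}$ (a set of size $\le i$): since $j_\ell\ge\ell$ we get $\sum_{j\in P}|a_{\sigma_j}|^p w_j\le\sum_\ell |a_{\sigma_{j_\ell}}|^p w_\ell$, and as $\ell\mapsto\sigma_{j_\ell}$ is an injective map into $\{1,\dots,i\}$, the finite rearrangement bound (Theorem~\ref{basicth2} applied to $r_i(a)$) makes the right side at most $S_i(a)$. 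Similarly, enumerate $Q=\{j_1'<j_2'<\dots\}$ and write $\sigma_{j'_\ell}=i+m_\ell$ with the $m_\ell\ge1$ pairwise distinct; then $\sum_{j\in Q}|a_{\sigma_j}|^p w_j\le\sum_\ell |a_{i+m_\ell}|^p w_\ell\le\|a_{(i+1\dots\infty)}\|_{p,w}^p=T_i(a)$. Adding the two bounds yields $\|a\|_{p,w}^p\le S_i(a)+T_i(a)$, completing the chain.

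I expect the bookkeeping in the last paragraph to be the only delicate point: one must be careful to split the optimal map $\sigma$ by \emph{which entry} (head or tail) lands at each position, apply the termwise domination $w_{j_\ell}\le w_\ell$ correctly on each of the two resulting position sets, and then recognise the head part as bounded by $S_i(a)$ (not merely by something slightly larger) and the tail part, after reindexing, as an admissible competitor for $T_i(a)$. The remaining steps are just reindexing of series of nonnegative terms and the concatenation of injective maps.
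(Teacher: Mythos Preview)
Your proof is correct and follows the same overall architecture as the paper's: the equalities come from the definition of $\sigma$, the left inequality by exhibiting $S_i(a)+\tilde S_i(a)$ (up to $\varepsilon$) as the value of the norm functional at a single injective map, and the right inequality by splitting $\|a\|_{p,w}^p$ according to whether an entry comes from the head or the tail of $a$.

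The one place your argument genuinely differs from the paper's is the mechanics of the right inequality. The paper proves it as $H_i(a)\le S_i(a)$ (asserted ``by definition of the seminorm'') together with a termwise rank comparison $\tilde H_i(a)\le T_i(a)$: for each $j>i$ the rank $\sigma_j^{-1}$ of $a_j$ in the full sequence is at least its rank $\psi_j^{-1}$ in the tail, hence $w_{\sigma_j^{-1}}\le w_{\psi_j^{-1}}$. You instead apply one uniform device to both pieces: enumerate the position set, use $j_\ell\ge\ell$ to replace $w_{j_\ell}$ by $w_\ell$, and then recognise the result as an admissible competitor for $S_i(a)$ or $T_i(a)$ respectively. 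Your route is a bit more self-contained (it does not need the rank observation) and in fact fills in a step the paper leaves implicit, since $H_i(a)$ uses the weights $w_{\sigma_1^{-1}},\dots,w_{\sigma_i^{-1}}$ rather than $w_1,\dots,w_i$, so the bound $H_i(a)\le S_i(a)$ is not literally ``by definition'' but requires exactly the $j_\ell\ge\ell$ monotonicity you spell out. The paper's termwise comparison, on the other hand, is slightly more conceptual for the tail piece and immediately gives the sharper termwise inequality rather than just the sum inequality.
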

\begin{proof}
The first inequality follows directly from the fact that $S_i(a)+\tilde{S}_i(a)$ cannot be greater than the $p$-th power of the Lorentz norm, which is realised by $H_i(a)+\tilde{H}_i(a)$. In the second inequality, analogously, there is $H_i(a) \le S_i(a)$ by definition of the seminorm, so we only need to show that $\tilde{H}_i(a) \le T_i(a)$. For this purpose, let us concentrate on any particular summand associated with $a_j$ ($j>i$). In $\tilde{H}_i(a)$, it takes the form of $|a_j|^p \cdot w_{\sigma_j^{-1}}$. In $T_i(a)$, it can be similarly described as $|a_j|^p \cdot w_{\psi_j^{-1}}$, with the difference that while the map $\sigma_j^{-1}$ is equal to the number of of all elements of $a$ greater than $|a_j|$, the map $\psi_j^{-1}$ counts only over the elements that have indices above $i$. Hence $\sigma_j^{-1} \ge \psi_j^{-1}$, $w_{\sigma_j^{-1}} \le w_{\psi_j^{-1}}$ and finally $\tilde{H}_i(a) \le T_i(a)$, completing the proof of the lemma.
\end{proof}

\begin{notice}
\label{mainrem2}
Recombining our inequality yields $||a||_{p, w}^p - S_i(a) \le T_i(a)$. Therefore, when in any set $A$ the norms of the tails approach $0$ uniformly for $i \rightarrow \infty$, said set is equinormed.
\end{notice}

\begin{lemma}[Uniform behaviour of norms of the tails]
\label{mainlem5}
When norms of the tails in a set $A \subset L_{p, w}$ approach $0$ uniformly for $i \rightarrow \infty$, the same is true for $A-A$.
\end{lemma}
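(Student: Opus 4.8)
The plan is to observe that, for each fixed $i$, the functional $a \mapsto T_i(a)^{1/p}$ is a seminorm on $L_{p,w}$, after which the triangle inequality delivers the conclusion in one line. First I would introduce the tail operator $\Sigma_i \colon L_{p,w} \to L_{p,w}$, $\Sigma_i(a) = a_{(i+1\ldots\infty)}$, and record two elementary facts: it is linear, and removing finitely many coordinates from a sequence can only decrease its nonincreasing rearrangement pointwise (the same termwise comparison already used in the proof of Theorem \ref{basicth2}). The second fact shows both that $\|\Sigma_i a\|_{p,w} \le \|a\|_{p,w} < \infty$, so that $\Sigma_i$ really maps $L_{p,w}$ into itself, and that $T_i(a) = \|\Sigma_i a\|_{p,w}^p$ is nonincreasing in $i$. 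Since $T_i^{1/p} = \|\Sigma_i(\cdot)\|_{p,w}$ is the composition of a linear map with the Lorentz norm, it is a seminorm; in particular, using homogeneity and subadditivity of the norm, $T_i(a-b)^{1/p} \le T_i(a)^{1/p} + T_i(-b)^{1/p} = T_i(a)^{1/p} + T_i(b)^{1/p}$ for all $a,b$.

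With this in hand the rest is routine. Given $\varepsilon > 0$, I would set $\delta = \varepsilon / 2^p$ and invoke the hypothesis to pick $N$ with $T_i(a) < \delta$, hence $T_i(a)^{1/p} < \delta^{1/p}$, for every $a \in A$ and every $i \ge N$. Then for arbitrary $a, b \in A$ and $i \ge N$,
\[
T_i(a - b)^{1/p} \le T_i(a)^{1/p} + T_i(b)^{1/p} < 2\delta^{1/p},
\]
so $T_i(a - b) < 2^p \delta = \varepsilon$. Since $a - b$ ranges over all of $A - A$, this is exactly the statement that the norms of the tails of elements of $A - A$ tend to $0$ uniformly as $i \to \infty$. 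Note that no boundedness of $A$ is needed anywhere.

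The only point requiring genuine attention — and thus the main (rather mild) obstacle — is the justification that $\|\cdot\|_{p,w}$ obeys the triangle inequality, so that $T_i^{1/p}$ is subadditive, together with the companion claim $\Sigma_i(L_{p,w}) \subset L_{p,w}$. Both reduce to the pointwise domination of the nonincreasing rearrangement of a subsequence by that of the parent sequence, which is the sort of monotonicity argument already appearing in Section 2; nothing beyond recording it carefully is required. Everything else is a one-line application of homogeneity and the triangle inequality.
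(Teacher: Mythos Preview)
Your proof is correct and is essentially the same argument as the paper's: both observe that the tail operator is linear, apply the triangle inequality for $\|\cdot\|_{p,w}$ to get $T_i(a-b)^{1/p}\le T_i(a)^{1/p}+T_i(b)^{1/p}$, and then rescale by the factor $2^p$ to pass from tails bounded by $\delta$ to tails of differences bounded by $2^p\delta$. You add a bit more care about why $\Sigma_i$ lands in $L_{p,w}$ and why the triangle inequality holds, but the core idea and the computation are identical.
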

\begin{proof}
We can use the norm properties of $||\cdot||_{p, w}$, applying the triangle inequality to claim instantly that $$\forall \varepsilon>0 \; \exists k \in \mathbb{N} \; \forall a, b \in A \; ||a_{(k+1...\infty)}||_{p, w}<\varepsilon, \; ||b_{(k+1...\infty)}||_{p, w}<\varepsilon \implies$$ $$\implies ||(a-b)_{(k+1...\infty)}||_{p, w}<2 \cdot \varepsilon.$$ Therefore $$\forall \varepsilon>0 \; \exists k \in \mathbb{N} \; \forall a, b \in A \; ||a_{(k+1...\infty)}||_{p, w}^p<\varepsilon, \; ||b_{(k+1...\infty)}||_{p, w}^p<\varepsilon \implies$$ $$\implies ||(a-b)_{(k+1...\infty)}||_{p, w}<2 \cdot \sqrt[p]{\varepsilon} \iff ||(a-b)_{(k+1...\infty)}||_{p, w}^p<2^p \cdot \varepsilon.$$
\end{proof}

These results give us a clear path towards proving the entire theorem. Now it is enough to show that equinormedness of $A$ implies that norms of the tails in $A$ approach $0$ uniformly for $i \rightarrow \infty$. There is an intuitive and fairly trivial step of showing the same for tails by weight of the norms in $A$, and making these two steps will be the content of our final two lemmas.

\begin{lemma}[Tails by weight of the norms approach $0$ uniformly]
When $A$ is bounded and equinormed,
$$\forall \varepsilon>0 \; \exists N \in \mathbb{N} \; \forall a \in A \; \forall i \ge N \; \tilde{W}_i(a)=\sum_{j=i+1}^\infty |a_{\sigma_j}|^p \cdot w_j < \varepsilon.$$ (Reading this notation, it is important to remember that the map $\sigma$ depends on the choice of $a$.)
\end{lemma}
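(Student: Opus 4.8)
The plan is to rewrite $\tilde{W}_i(a)$ by means of the norm decomposition recorded in Lemma~\ref{mainlem4}, compare it with the seminorm complement $\|a\|_{p,w}^p - S_i(a)$, and then close the argument using equinormedness together with the monotonicity of the seminorms in the index. First I would observe that, by the very definition of the tail by weight of the norm, $\|a\|_{p,w}^p = W_i(a) + \tilde{W}_i(a)$, so that $\tilde{W}_i(a) = \|a\|_{p,w}^p - W_i(a)$; it therefore suffices to bound $\|a\|_{p,w}^p - W_i(a)$ from above uniformly on $A$ for all sufficiently large $i$.

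The main point is the comparison $S_i(a) \le W_i(a)$, valid for every $i$ and every $a \in L_{p,w}$. Indeed, $S_i(a) = \|r_i(a)\|_{p,w}^p$ is attained, by the remark following the introduction of the family $\{\|\cdot\|_{p,w,i}\}$ and by the first part of Theorem~\ref{basicth2}, as $\sum_{j=1}^i |a_{\pi_j}|^p \cdot w_j$, where $\pi$ is the nonincreasing rearrangement of the finite set $\{1,\dots,i\}$; whereas $W_i(a) = \sum_{j=1}^i |a_{\sigma_j}|^p \cdot w_j$ is built from the $i$ globally largest entries of $a$. Since the nonincreasing rearrangement of the $i$ largest values of the whole sequence dominates, term by term, the nonincreasing rearrangement of $\{|a_1|,\dots,|a_i|\}$, and since all weights $w_j$ are nonnegative, this yields $W_i(a) \ge S_i(a)$, and hence
\[ \tilde{W}_i(a) = \|a\|_{p,w}^p - W_i(a) \le \|a\|_{p,w}^p - S_i(a). \]

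Finally I would invoke equinormedness: given $\varepsilon > 0$, there is $n(\varepsilon) \in \mathbb{N}$ with $\|a\|_{p,w}^p - S_{n(\varepsilon)}(a) < \varepsilon$ for every $a \in A$. By the lemma on the second condition of regularity, $S_i(a)$ is nondecreasing in $i$, so for all $i \ge n(\varepsilon)$ and all $a \in A$ we get $\|a\|_{p,w}^p - S_i(a) \le \|a\|_{p,w}^p - S_{n(\varepsilon)}(a) < \varepsilon$, and therefore $\tilde{W}_i(a) < \varepsilon$. Taking $N = n(\varepsilon)$ then proves the lemma. I do not expect a genuine obstacle here; the only step that needs a line of justification is the termwise domination underlying $S_i(a) \le W_i(a)$, which is the same monotone rearrangement observation already exploited in Theorem~\ref{basicth2}, and the boundedness of $A$ is not really used beyond being the standing hypothesis of the main theorem.
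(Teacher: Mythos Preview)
Your argument is correct and follows exactly the paper's own route: rewrite $\tilde{W}_i(a)=\|a\|_{p,w}^p - W_i(a)$, observe $S_i(a)\le W_i(a)$ via the rearrangement argument of Theorem~\ref{basicth2}, and then invoke equinormedness in its $p$-th power form. One small quibble: boundedness is not entirely idle, since the passage from the original equinormedness condition to the $p$-th power version you use relies on Remark~\ref{comprem1}, which requires $A$ to be bounded.
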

\begin{proof}
Because $W_i(a)+\tilde{W}_i(a)=||a||_{p, w}^p$, and the $p$-th powers of seminorms approach $||a||_{p, w}$ uniformly on $A$, it is enough to show that $\forall a \in A \; \forall i \in \mathbb{N} \; S_i(a) \le W_i(a)$. And we have already proven this, given that $W_i(a)$ is constructed according to the procedure described in Theorem \ref{basicth2}.
\end{proof}

\begin{lemma}[Norms of the tails approach $0$ uniformly]
\label{mainlem7}
When $A$ is bounded and equinormed,
$$\forall \varepsilon>0 \; \exists N \in \mathbb{N} \; \forall a \in A \; \forall i \ge N \; T_i(a)=||a_{(i+1...\infty)}||_{p, w}^p < \varepsilon.$$
\end{lemma}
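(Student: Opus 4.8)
The plan is to bound $T_i(a)$ by splitting the Lorentz norm of the tail $t \zdef a_{(i+1...\infty)}$ into the contribution of its finitely many largest entries and the rest. The first piece will be controlled because, once $i$ is large, every entry of $t$ is uniformly tiny by Lemma \ref{mainlem3}; the rest will be controlled because it is dominated by $\tilde{W}_\ell(a)$, which vanishes uniformly on $A$ by the previous lemma (that the tails by weight of the norms approach $0$ uniformly on $A$).

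Concretely, I would let $b_1 \ge b_2 \ge \dots$ be the nonincreasing rearrangement of $\{|a_n| : n>i\}$, so that by Theorem \ref{basicth2} applied to $t$ (which lies in $L_{p, w}$, since every candidate sum for $t$ is a candidate sum for $a$, hence $||t||_{p, w} \le ||a||_{p, w}$) we have $T_i(a) = \sum_{k=1}^\infty b_k^p w_k$. The key elementary observation is that deleting the $i$ head coordinates of $a$ cannot raise any order statistic, so $b_k \le |a_{\sigma_k}|$ for every $k$, with $\sigma$ the optimal map for $a$. Fixing $\ell \in \mathbb{N}$ and splitting the series at $\ell$, this yields $$T_i(a) = \sum_{k=1}^\ell b_k^p w_k + \sum_{k=\ell+1}^\infty b_k^p w_k \le \ell \cdot b_1^p + \sum_{k=\ell+1}^\infty |a_{\sigma_k}|^p w_k = \ell \cdot b_1^p + \tilde{W}_\ell(a),$$ where the first sum uses $w_k \le w_1 = 1$ and $b_k \le b_1$, and the second uses $b_k \le |a_{\sigma_k}|$ termwise. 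This estimate holds for every $i$ and every $\ell$.

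To finish, given $\varepsilon>0$ I would first pick, by the previous lemma, some $\ell \in \mathbb{N}$ with $\tilde{W}_\ell(a)<\frac{\varepsilon}{2}$ for all $a\in A$; then, with this $\ell$ now fixed, choose $d>0$ so small that $\ell d^p<\frac{\varepsilon}{2}$ and apply Lemma \ref{mainlem3} to get $N\in\mathbb{N}$ with $|a_n|<d$ for all $a\in A$ and all $n\ge N$. For $i\ge N$ every entry of $t$ sits at an index $>i\ge N$, so $b_1<d$, and therefore $T_i(a)\le \ell d^p+\tilde{W}_\ell(a)<\varepsilon$ for all $a\in A$, which is exactly the asserted uniform estimate.

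I expect the only real obstacle to be arranging the decomposition so that both pieces are genuinely small simultaneously: a crude bound such as $T_i(a)\le b_1^p\sum_k w_k$ is worthless because $\sum_k w_k=\infty$, so one must peel off precisely the finitely many large-weight terms (indices $\le\ell$) and kill them via the uniform decay of the entries, while handing the remaining terms to the already-established uniform decay of $\tilde{W}_\ell$. The termwise domination $b_k\le|a_{\sigma_k}|$ — that restricting $a$ to its tail coordinates only shrinks its nonincreasing rearrangement — is the small structural fact linking the remainder to $\tilde{W}_\ell(a)$, and it should be checked with a little care regarding ties in the values of $a$.
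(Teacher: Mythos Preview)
Your argument is correct and essentially the same as the paper's: both split $T_i(a)=\sum_{k\ge 1} b_k^p w_k$ at a fixed index $\ell$ (the paper calls it $k$), bound the tail piece $\sum_{k>\ell} b_k^p w_k$ by $\tilde{W}_\ell(a)$ via the termwise domination $b_k\le |a_{\sigma_k}|$ and then invoke the previous lemma, and bound the head piece using Lemma \ref{mainlem3} so that $b_1$ (hence every $b_k$ with $k\le\ell$) is uniformly small once $i$ is large. The only cosmetic difference is that the paper writes the decomposition as $T_i(a)=W_\ell(b)+\tilde{W}_\ell(b)$ and first shows $\tilde{W}_\ell(b)\le\tilde{W}_\ell(a)$, whereas you pass to $\tilde{W}_\ell(a)$ in one line; your caution about ties in the rearrangement is warranted but easily handled.
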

\begin{proof}
By the previous lemma, there exists such $k \in \mathbb{N}$ that $\forall a \in A \; \tilde{W}_k(a) < \frac{\varepsilon}{2}$. Now let $b$ be the $i$-th tail of some $a \in A$, that is $b=a_{(i+1...\infty)}$. It is easy to notice that also $\tilde{W}_k(b) < \frac{\varepsilon}{2}$. Indeed, $$\tilde{W}_k(a)-\tilde{W}_k(b)=\sum_{j=k+1}^\infty w_j(|a_{\sigma_j}|^p-|b_{\psi_j}|^p),$$ where $\psi$ is a map such that $|b_\psi|$ is nonincreasing (hence it is the optimal injective map for the sequence $b$). Yet, $b$ is a subsequence of $a$, so $b_\psi$ is a subsequence of the nonincreasing $|a_\sigma|$. Therefore, for all natural $j$, $|a_{\sigma_j}|^p-|b_{\psi_j}|^p>0$ and every summand in the expression is nonnegative.
\newline
\newline
At the same time, $T_i(a)=W_k(b)+\tilde{W}_k(b)$. Now we only need to find such $i$ that $W_k(b) < \frac{\varepsilon}{2}$. Since $b=a_{(i+1...\infty)}$, applying Lemma \ref{mainlem3} shows that for any $d>0$ choosing $N=\gamma(d^\frac{1}{p})$ returns $$\forall j \in \mathbb{N}, \; i \ge N, \; b \in A_{(i+1...\infty)} \; |b_{\psi_j}|^p<d.$$ Because $W_k(b)=\sum_{j=1}^k w_j \cdot |b_{\psi_j}|^p$, $$N=\gamma((\frac{\varepsilon}{2 \sum_{j=1}^k w_j})^\frac{1}{p})$$ gives us the desired result.
\end{proof}
We have carried out all the preparations necessary to prove the main theorem in a few short steps.

\begin{theorem}
A set $A \subset L_{p, w}$ is precompact if and only if it is bounded and equinormed.
\end{theorem}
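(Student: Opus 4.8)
The plan is to reduce everything to the Corollary of Section~3, which already characterises precompactness of $A$ by: $A$ is bounded and $A-A$ is equinormed with respect to $\{\|\cdot\|_{p,w,i}\}$. So it is enough to show that, for a bounded set, equinormedness of $A$ and equinormedness of $A-A$ are equivalent.

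The forward direction ($A$ bounded and equinormed $\Rightarrow$ $A$ precompact) is where the real work lies, but it has essentially been carried out already. If $\exists m\in\mathbb{N}\ \forall\varepsilon>0\ n(\varepsilon)\le m$, then Lemma~\ref{mainlem2} directly gives that $A-A$ is equinormed. In the complementary case we may use the assumption $\forall m\in\mathbb{N}\ \exists\varepsilon>0\ n(\varepsilon)>m$, and then the chain runs: the lemma on tails by weight gives $\tilde W_i(a)\to0$ uniformly on $A$; Lemma~\ref{mainlem7} promotes this to $T_i(a)\to0$ uniformly on $A$ (the norms of the tails tend to $0$ uniformly on $A$); Lemma~\ref{mainlem5} carries the same property over to $A-A$; and Remark~\ref{mainrem2} turns it into equinormedness of $A-A$. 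In both cases $A-A$ is equinormed and $A$ is bounded, so the Corollary yields precompactness.

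For the converse, suppose $A$ is precompact. Then $A$ is bounded, and it only remains to see that $A$ itself is equinormed. Fix $\varepsilon>0$ and choose finitely many points $a^{(1)},\dots,a^{(m)}$ such that every $a\in A$ satisfies $\|a-a^{(k)}\|_{p,w}<\varepsilon$ for some $k$. Each $a^{(k)}$ lies in $L_{p,w}$, so by the first condition of regularity $\|a^{(k)}\|_{p,w}=\lim_n\|a^{(k)}\|_{p,w,n}$, and since by the second condition the seminorms are nondecreasing in the index there is a single $N$ with $\|a^{(k)}\|_{p,w}-\|a^{(k)}\|_{p,w,N}<\varepsilon$ for all $k$. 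For $a\in A$ choose such a $k$; combining the triangle inequality for $\|\cdot\|_{p,w}$, the triangle inequality for $\|\cdot\|_{p,w,N}$, and the inequality $\|\cdot\|_{p,w,N}\le\|\cdot\|_{p,w}$ gives $\|a\|_{p,w}-\|a\|_{p,w,N}<3\varepsilon$. Since $\varepsilon$ was arbitrary, $A$ is equinormed.

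Within this final theorem the only delicate point is honouring the case split from Lemma~\ref{mainlem2} in the forward direction, which is exactly what lets Lemma~\ref{mainlem3} (and through it Lemma~\ref{mainlem7}) legitimately appeal to $n(\varepsilon)$ being unbounded; the $\varepsilon$-net argument for the converse is routine and is really just the general fact that on a precompact set the seminorms of a regular family converge to the norm uniformly.
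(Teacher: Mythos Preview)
Your proof is correct and follows essentially the same approach as the paper: the forward direction runs through the same chain Lemma~\ref{mainlem7} $\to$ Lemma~\ref{mainlem5} $\to$ Remark~\ref{mainrem2} $\to$ Theorem~\ref{compth1}, and you are in fact slightly more careful than the paper by making the case split from Lemma~\ref{mainlem2} explicit. The only real difference is in the converse: the paper simply cites Proposition~3.13 of \cite{gkm} for ``precompact $\Rightarrow$ equinormed'', whereas you supply a short self-contained $\varepsilon$-net argument --- but that is exactly the standard proof of the cited proposition, so the two routes coincide.
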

\begin{proof}
When $A$ is bounded and equinormed, by Lemma \ref{mainlem7} norms of the tails in $A$ approach $0$ uniformly, so by Lemma \ref{mainlem5} the same is true for the tails in $A-A$. Hence, Remark \ref{mainrem2} shows that $A-A$ is equinormed, and we can claim that $A$ is precompact thanks to Theorem \ref{compth1}. On the other hand, it is well known that a precompact set is bounded. The fact that it is equinormed, with respect to a regular family of seminorms (in its normed space as per Definition \ref{compdef2}), is given in \cite{gkm}, as Proposition 3.13 states directly that every precompact subset is equinormed.
\end{proof}

On a final note, we will provide an alternative formulation of our compactness condition, which is ostensibly stronger but might be easier to use in particular cases. The proof will reuse many techniques from the previous two lemmas.

\begin{theorem}[Tails of the norms approach $0$ uniformly]
\label{mainth9}
When $A$ is bounded, it is equinormed if and only if
$$\forall \varepsilon>0 \; \exists N \in \mathbb{N} \; \forall a \in A \; \forall i \ge N \; \tilde{H}_i(a)=\sum_{j=i+1}^\infty |a_j|^p \cdot w_{\sigma_j^{-1}} < \varepsilon.$$
\end{theorem}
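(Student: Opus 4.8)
The plan is to obtain both implications almost for free from the inequalities collected in Lemma~\ref{mainlem4} together with Lemma~\ref{mainlem7}; the statement is essentially a repackaging of those results, and the only ingredients I will need are the two one-line comparisons $H_i(a)\le S_i(a)$ and $\tilde H_i(a)\le T_i(a)$ that already appear inside the proof of Lemma~\ref{mainlem4}.

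For the ``only if'' direction, suppose $A$ is bounded and equinormed. By Lemma~\ref{mainlem7} the tail norms decay uniformly: for every $\varepsilon>0$ there is an $N$ with $T_i(a)<\varepsilon$ for all $a\in A$ and all $i\ge N$. I then invoke $\tilde H_i(a)\le T_i(a)$: for an index $j>i$, the global rank $\sigma_j^{-1}$ of $a_j$ is at least its rank inside the tail $a_{(i+1\dots\infty)}$, so the weight multiplying $|a_j|^p$ in $\tilde H_i(a)$ is no larger than the one multiplying it in $T_i(a)$, and summing over $j>i$ gives the inequality. Combining the two statements yields exactly the uniform decay of $\tilde H_i$ asserted in the theorem.

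For the ``if'' direction, assume the uniform decay of $\tilde H_i$ on $A$. Using $H_i(a)+\tilde H_i(a)=||a||_{p, w}^p$ and $H_i(a)\le S_i(a)$ — the latter holding because any optimal injective map $\sigma$ makes $|a_\sigma|$ nonincreasing, so pairing $|a_1|^p,\dots,|a_i|^p$ with the weights $w_{\sigma_1^{-1}},\dots,w_{\sigma_i^{-1}}$ cannot exceed the optimal pairing with $w_1,\dots,w_i$ that defines $S_i(a)$ — I get
$$||a||_{p, w}^p-S_i(a)=\tilde H_i(a)-\bigl(S_i(a)-H_i(a)\bigr)\le\tilde H_i(a).$$
Hence, choosing $N$ so that $\tilde H_i(a)<\varepsilon$ for all $a\in A$ and all $i\ge N$, we obtain $||a||_{p, w}^p-S_N(a)<\varepsilon$ for every $a\in A$; since $A$ is bounded, Remark~\ref{comprem1} turns this into equinormedness in the sense of Definition~\ref{compdef3}.

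The proof presents no genuine obstacle beyond bookkeeping. The one point that needs attention is that the comparisons $H_i(a)\le S_i(a)$ and $\tilde H_i(a)\le T_i(a)$ must be applied with precisely the canonical optimal map $\sigma$ used to define $H_i$ and $\tilde H_i$; this is harmless, since every such map renders $|a_\sigma|$ nonincreasing, which is all the rearrangement comparisons require. If a self-contained presentation is preferred one may simply reprove these two short inequalities in place rather than citing Lemma~\ref{mainlem4} — this is presumably what ``reusing techniques from the previous lemmas'' is meant to indicate.
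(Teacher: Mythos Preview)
Your proof is correct. The ``if'' direction matches the paper's argument exactly: both use $H_i(a)\le S_i(a)$ together with $H_i(a)+\tilde H_i(a)=\|a\|_{p,w}^p$ and Remark~\ref{comprem1}.

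For the ``only if'' direction, however, you take a genuinely shorter route than the paper. You simply invoke Lemma~\ref{mainlem7} (uniform decay of $T_i$) and the comparison $\tilde H_i(a)\le T_i(a)$ from Lemma~\ref{mainlem4}, which finishes the argument in two lines. The paper instead bypasses Lemma~\ref{mainlem7} and works directly: it fixes $k$, decomposes $S_k(a)-H_i(a)$ term by term according to whether $\sigma_l>i$ or not, bounds the surviving terms by $\sum_{j=1}^k w_j\,|a_{\sigma_j}|^p\,\chi(\sigma_j>i)$, and then applies Lemma~\ref{mainlem3} to make this small. Your approach is cleaner and exploits the existing lemma structure more fully; the paper's approach is more self-contained (it does not rely on the chain of lemmas leading to Lemma~\ref{mainlem7}) and makes explicit how the uniform control over large-index entries from Lemma~\ref{mainlem3} drives the result. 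Logically both are sound, and since Lemma~\ref{mainlem7} precedes the theorem, your citation of it is entirely legitimate.
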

\begin{proof}
The implication from the stated fact to the equinormedness is rather simple. Since $H_i(a)+\tilde{H}_i(a)=||a||_{p, w}^p$, the values of $H_i(a)$ approach $||a||_{p, w}^p$ uniformly on $A$, and as we have mentioned in the proof of Lemma \ref{mainlem4}, there is $H_i(a) \le S_i(a)$ by definition of the seminorm, so (using Remark \ref{comprem1}) the seminorms also exhibit the same behaviour on $A$. We can concentrate on proving the implication in the opposite direction.
\newline
\newline
Similarly, because $H_i(a)+\tilde{H}_i(a)=||a||_{p, w}^p$, and seminorms approach $||a||_{p, w}^p$ uniformly on $A$, it is enough to show that $$\forall \varepsilon>0 \; \forall k \in \mathbb{N} \; \exists N \in \mathbb{N} \; \forall a \in A \; \forall i \ge N \; S_k(a)-H_i(a) < \varepsilon.$$ Now, let us try to estimate the expression $S_k(a)-H_i(a)$, once again breaking it down by elements. For any summand of $S_k(a)$ denoted as $w_l \cdot |a_{\sigma_l^{(k)}}|^p$ ($1 \le l \le k$), there are two possibilities: its unique counterpart $w_l \cdot |a_{\sigma_l}|^p$ occurs as a summand in $H_i(a)$ or it does not, the second type happening exactly when $\sigma_l>i$. Let us recall that $|a_{\sigma_l^{(k)}}|$ can be interpreted as the $l$-th greatest absolute value taken from the first $k$ elements of sequence $a$, while $|a_{\sigma_l}|$ is the $l$-th greatest absolute value from the whole sequence, so obviously $|a_{\sigma_l^{(k)}}| \le |a_{\sigma_l}|$.
\newline
\newline
For this reason, the summand differences of the first type can be estimated from above by $w_l \cdot |a_{\sigma_l^{(k)}}|^p-w_l \cdot |a_{\sigma_l}|^p \le 0$, and the summands of the second type by $w_l \cdot |a_{\sigma_l^{(k)}}|^p \le w_l \cdot |a_{\sigma_l}|^p$. This breakdown of the expression may ignore some summands from $H_i(a)$, but they are nonnegative and, in subtraction, can also be estimated from above by $0$. Finally, we are able to write the inequality $$S_k(a)-H_i(a) \le \sum_{j=1}^k w_j \cdot |a_{\sigma_j}|^p \cdot \chi(\sigma_j>i),$$ with $\chi$, like earlier, denoting the characteristic function (equal $1$ if the expression in brackets is true and $0$ otherwise). Now it is enough to apply Lemma \ref{mainlem3}, showing that for any $d>0$ it is possible to choose $N=\gamma(d^\frac{1}{p})$ and guarantee that $$\forall j \in \mathbb{N}, \; i \ge N, \; a \in A \; |a_{\sigma_j}|^p \cdot \chi(\sigma_j>i) < d.$$ This gives us $S_k(a)-H_i(a) < \sum_{j=1}^k w_j \cdot d$, yielding the thesis of this theorem for $$N=\gamma((\frac{\varepsilon}{\sum_{j=1}^k w_j})^\frac{1}{p}).$$
\end{proof}

\begin{corollary}
A set $A \subset L_{p, w}$ is precompact if and only if it is bounded in $||\cdot||_{p, w}$ and $$\forall \varepsilon>0 \; \exists N \in \mathbb{N} \; \forall a \in A \; \forall i \ge N \; \sum_{j=i+1}^\infty |a_j|^p \cdot w_{\sigma_j^{-1}} < \varepsilon.$$ This is a direct consequence of Theorem \ref{mainth9}.
\end{corollary}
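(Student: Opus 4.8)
The plan is to obtain this corollary by composing two facts already in hand: the main theorem of this section, which states that a non-empty $A \subset L_{p,w}$ is precompact if and only if it is bounded in $\|\cdot\|_{p,w}$ and equinormed with respect to the family $\{\|\cdot\|_{p,w,i}\}_{i \in \mathbb{N}}$, and Theorem \ref{mainth9}, which states that for a bounded $A$ equinormedness holds precisely when $\tilde{H}_i(a) = \sum_{j=i+1}^\infty |a_j|^p \cdot w_{\sigma_j^{-1}}$ tends to $0$ uniformly on $A$. Since the condition displayed in the statement is verbatim the uniform-tail condition of Theorem \ref{mainth9}, chaining the two equivalences yields the claim.

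I would spell out the two directions only in order to keep track of where boundedness enters, since Theorem \ref{mainth9} is phrased conditionally on $A$ being bounded. If $A$ is precompact, then it is bounded (a precompact set in a normed space is bounded, a fact already invoked in the proof of the main theorem), so Theorem \ref{mainth9} is applicable; and because the main theorem gives equinormedness of $A$, the uniform-tail condition follows. Conversely, if $A$ is bounded and the uniform-tail condition holds, Theorem \ref{mainth9} produces equinormedness, whereupon the main theorem produces precompactness.

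The argument is therefore essentially bookkeeping, with no analytic obstacle remaining — all of it has been absorbed into Lemmas \ref{mainlem1}--\ref{mainlem7} and Theorem \ref{mainth9}. The one point worth watching is not to attempt to extract boundedness from the uniform-tail condition by itself: boundedness is an independent hypothesis here, and Theorem \ref{mainth9} presupposes it.
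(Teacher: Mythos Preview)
Your proposal is correct and matches the paper's own treatment: the paper gives no separate proof at all, simply stating that the corollary is a direct consequence of Theorem~\ref{mainth9} (implicitly combined with the main precompactness theorem). Your explicit tracking of where boundedness is needed is a welcome clarification but does not depart from the paper's approach.
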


{\bf Acknowledgements.} The author would like to thank Professor Jacek Gulgowski for his invaluable support through all the research and editorial process, particularly concerning the idea of investigating compactness with use of seminorms.

\newpage

\end{document}